\documentclass[reqno,11pt]{amsart}

\usepackage{amsmath,amsfonts,amssymb,amsthm,epsfig}
\usepackage{mathtools,enumerate}
\usepackage[english]{babel}
\usepackage{tikz}
\usetikzlibrary{arrows}
\usepackage{comment}

 \allowdisplaybreaks
\numberwithin{equation}{section}

\font\script=rsfs10 at 11pt
\def\eps{\varepsilon}
\def\F{{\mathcal F}}
\def\G{{\mathcal G}}
\def\H{{\mbox{\script H}\,\,}}
\def\Bar{{\rm bar}}
\def\A{{\mathcal A}}
\def\R{\mathbb R}
\def\RR{\mathcal R}
\def\S{\mathbb S}
\def\N{\mathbb N}
\def\INT{{\rm Int}}
\def\bal{\begin{aligned}}
	\def\eal{\end{aligned}}
\def\proofof#1{\begin{proof}[Proof of #1]}
	\def\Chi#1{\hbox{{\large $\chi$}{\Large $_{_{#1}}$}}}
	
	\def\step#1#2{\par\noindent{\underline{\it Step~#1.}}\emph{ #2}\\}
	
	\def\XXint#1#2#3{{\setbox0=\hbox{$#1{#2#3}{\int}$} \vcenter{\vspace{-1pt}\hbox{$#2#3$}}\kern-.5\wd0}}

	\def\comp{\subset\subset}

	\pgfarrowsdeclare{<<<}{>>>}
	{
		\arrowsize=0.2pt
		\advance\arrowsize by .5\pgflinewidth
		\pgfarrowsleftextend{-4\arrowsize-.5\pgflinewidth}
		\pgfarrowsrightextend{.5\pgflinewidth}
	}
	{
		\arrowsize=0.2pt
		\advance\arrowsize by .5\pgflinewidth
		\pgfpathmoveto{\pgfpointorigin}
		\pgfpathlineto{\pgfpoint{-1.2mm}{-.8mm}}
		\pgfusepathqstroke
		\pgfpathmoveto{\pgfpointorigin}
		\pgfpathlineto{\pgfpoint{-1.2mm}{.8mm}}
		\pgfusepathqstroke
	}%%%% DEFINIZIONE FRECCIA CARINA:FINE (per usarla, iniziare una figura con \begin{tikzpicture}[>=>>>] )
		
		\newcounter{mt}
		\def\maintheorem#1#2#3{\par \medskip \noindent {\bf Theorem~\mref{#1}}~(#2).~{\it #3}\par}
		\def\mref#1{\Alph{#1}}
		\def\maintheoremdeclaration#1{\stepcounter{mt}\newcounter{#1}\setcounter{#1}{\arabic{mt}}}
		\maintheoremdeclaration{exist}
		\newtheorem{theorem}{Theorem}[section]
		\newtheorem{lemma}[theorem]{Lemma}
		\newtheorem{prop}[theorem]{Proposition}
		
		\newtheorem{defin}[theorem]{Definition}
		\newtheorem{remark}[theorem]{Remark}
		
\begin{document}
			
			\title[Minimizer of the barycentric isoperimetric ratio in $\R^2$]{On the minimizer of the barycentric isoperimetric ratio among connected planar sets}
			
			\author{C. Gambicchia}
			\address{Scuola Normale Superiore\newline \indent
				Piazza dei Cavalieri, 7, 56127 Pisa, Italy.}
			\email{\tt chiara.gambicchia@sns.it}

			\author{A. Pratelli}
			\address{Dipartimento di Matematica \newline \indent Università di Pisa\newline \indent
				Largo Bruno Pontecorvo, 5, 56127 Pisa, Italy.}
			\email{\tt aldo.pratelli@unipi.it}
			
			\author{C. Torres-Latorre}
			\address{Instituto de Ciencias Matem\'aticas \newline \indent Consejo Superior de Investigaciones Cient\'ificas\newline \indent
				C/ Nicol\'as Cabrera, 13-15, 28049 Madrid, Spain.}
			\email{\tt clara.torres@icmat.es}
			
			\begin{abstract}
				Among connected, planar sets, the ratio between the isoperimetric deficit and the squared barycentric asymmetry is known to be bounded from below by a strictly positive constant (as proved in the recent papers~\cite{BCH,GP}). This opens the natural question whether the infimum is attained at some set. In this paper, we prove that a minimizer for this ratio actually exists, and moreover it is simply connected.
			\end{abstract}

\newcommand{\CRIT}{\textbf{\textcolor{red}{[CRITICAL]}} }
\newcommand{\ERR}{\textbf{\textcolor{red!70!black}{[ERROR]}} }
\newcommand{\GAP}{\textbf{\textcolor{orange!80!black}{[GAP]}} }
\newcommand{\GLOSS}{\textbf{\textcolor{blue!60!black}{[GLOSS]}} }
\newcommand{\TYPO}{\textbf{[TYPO]} }
\newcommand{\OK}{\textbf{\textcolor{green!50!black}{[VERIFIED]}} }
\newcommand{\FIX}[1]{\emph{Fix: #1}}

			\maketitle

			\section{Introduction}
			
			Quantitative forms of isoperimetric inequalities have been intensively studied for a long time, and particularly in the last 20 years. The meaning of such inequalities is very simple. Namely, it is known that a set which minimizes the perimeter among those with fixed area is a ball; a quantitative version of this information says that a set which \emph{almost} minimizes the perimeter must be \emph{very close} to a ball. To state it precisely, we define the \emph{isoperimetric deficit} of a set $E\subseteq \R^N$ as
			\[
			\delta(E) = \frac{P(E) - P(B^E)}{P(B^E)}\,,
			\] 
			where $B^E$ is any ball with the same volume as $E$. Then, we need to consider a distance to measure how close $E$ is from being a ball; the most used such distance is the so-called \emph{Fraenkel asymmetry}, defined as
			\begin{equation}\label{deflambda}
				\lambda(E) = \min_{x\in\R^N} \frac{|E\Delta (x+B^E)|}{|E|}\,.
			\end{equation}
			The meaning of this definition is clear; that is, the Fraenkel asymmetry of a set $E$ is the volume of the symmetric difference between $E$ and the ``best translate'' of the ball $B^E$ (the fact that the infimum is actually a minimum is very simple to prove). Using this notion, the sharp quantitative isoperimetric inequality (see~\cite{FMP,CL,FiMP}) says that there exists some constant $C_F$ (only depending on the dimension $N$) such that for every set $E$ of finite perimeter one has
			\begin{equation}\label{qii}
				\lambda(E)\leq C_F \sqrt{\delta(E)}\,.
			\end{equation}
			However, the Fraenkel asymmetry is not the only possible choice for measuring how far $E$ is from being a ball. Its main problem is that it is not easy to compute, since it depends on which is the ``optimal ball'' realizing the minimum in~(\ref{deflambda}). Another possible choice is to use, instead, the \emph{barycentric asymmetry}, given by
			\[
			\lambda_0(E) = \frac{|E \Delta B_E|}{|E|}\,,
			\]
			where $B_E$ is the ball having the same volume as $E$ centered at the barycenter of $E$. On the one hand, this is a somewhat arbitrary choice, which is possible only for sets for which the barycenter is defined (for instance, bounded sets); but on the other hand, making this choice solves the uncertainty about the optimal ball, and in all the simple examples such a choice seems quite reasonable. In this regard, it is interesting to note that the barycentric asymmetry was introduced, even before the Fraenkel one, in the 1990's by Fuglede (see~\cite{F93,F89}). And moreover, even in cases where the Fraenkel asymmetry is used, the estimates are actually obtained by using the much easier to compute barycentric one, thanks to the obvious estimate $\lambda(E)\leq \lambda_0(E)$.
			
			The inequality analogous to~(\ref{qii}) for the barycentric asymmetry, that is,
			\[
			\lambda_0(E)\leq C \sqrt{\delta(E)}\,,
			\]
			is easily seen to be false for general sets of finite perimeter $E\subseteq\R^N$. However, it was proved by Fuglede in~\cite{F93} that the inequality is actually true, with a constant $C$ only depending on $N$, if one restricts himself to consider \emph{convex sets}, for which in particular the barycenter is always well defined. Many years later, in~\cite{BCH} it was proved that the same inequality is valid also for \emph{connected sets in $\R^2$}. Then, very recently, in the paper~\cite{GP} it was proved that the same inequality is again true for all \emph{bounded sets in $\R^N$}, with a constant depending on the dimension $N$ and on the diameter of the set $E$; as remarked in~\cite{GP}, this last result generalizes both previously mentioned cases. It is convenient to rewrite the inequality introducing the ratio $\G$ between the isoperimetric deficit and the square of the barycentric asymmetry; that is, the inequality reads as
			\begin{equation}\label{qbi}
				\G(E) := \frac{\delta(E)}{\lambda_0(E)^2} \geq c
			\end{equation}
			for some strictly positive constant $c>0$. A natural question then arises, namely, whether there is some set $E$ minimizing the ratio $\G$ in some class. In this paper, we consider this question for the simplest case, namely, the connected sets in $\R^2$. It is interesting to notice that the analogous question was posed also for the case of the inequality~(\ref{qii}); in fact, a byproduct of the proof of~(\ref{qii}) given in~\cite{CL} is that there exists a set minimizing the ratio $\delta(E)/\lambda(E)^2$ in the plane; in the paper~\cite{CL2} several properties of this optimal set have been found, and a candidate has been exhibited, but proving that this set is actually a minimizer is still an open question after more than $10$ years.
			
			The situation for the barycentric case has many similarities. The goal of this paper is to show that a minimizer for $\G$ in the class of planar, connected sets actually exists, and to study some of its properties; in particular, we will show that the minimizer must be without holes. Also for this case there is a candidate minimizer, conjectured in~\cite{BCH}, and this is the ``modified dumbbell'' $D$ shown in Figure~\ref{fig:dumbbell}. As one can see, the fact that this set is ``connected'' depends on what definition one wants to use, as well as the exact value of the perimeter. The precise definitions will be given in Section~\ref{secdefin}; however, roughly speaking we can say that the set $D$ will be considered as connected since it is the interior of a connected, non self-crossing curve, and its perimeter is the length of this curve (so, in particular the horizontal segment has to be counted twice). The main result of this paper is then the following one.
			
			\maintheorem{exist}{Existence of a minimizer for $\G(E)$}{There exists a minimizer of the ratio $\G(E)$ in the class of connected, planar sets. Moreover, any minimizer is simply connected.}
			
			\begin{figure}[thbp]%questo il dumbbell, piuttosto utile, me lo salvo.
				\begin{tikzpicture}[scale=1.5]
\draw[dashed] (0,0) circle (1);
\draw[thick] (-0.965,0.264) arc[start angle=22.24, end angle=337.76, radius=0.698];
\draw[thick] (-0.965,0.264) arc[start angle=202.24, end angle=270, radius=0.424];
\draw[thick] (-0.965,-0.264) arc[start angle=157.76, end angle=90, radius=0.424];
\draw[thick] (0.965,-0.264) arc[start angle=-157.76, end angle=157.76, radius=0.698];
\draw[thick] (0.965,0.264) arc[start angle=-22.24, end angle=-90, radius=0.424];
\draw[thick] (0.965,-0.264) arc[start angle=22.24, end angle=90, radius=0.424];
\draw[thick] (-0.58,0) -- (0.58,0);
\draw (-2.1,.8) node[anchor=north east] {$D$};
\draw (1,1.1) node[anchor=north east] {$B$};
		\end{tikzpicture}
				\caption{The ``modified dumbbell'', conjectured as minimizer of $\G$.}\label{fig:dumbbell}
			\end{figure}
			
The plan of the paper is the following. After the short Section~\ref{secdefin}, in which we fix the main notation and in particular we discuss the precise meaning of what a ``connected set'' and its perimeter are, in Section~\ref{secestidumb} we give an explicit estimate of the functional for the candidate minimizer. 
After that, we prove a very important a-priori formula for the curvature of the boundary of a minimizer in Section~\ref{seccurv}. Then, we divide the proof of Theorem~\mref{exist} in two parts. In Section~\ref{secexist} we prove the existence of a minimizer for $\G$; then, we prove the simple connectedness in Section~\ref{minsimcon}. Finally, in Appendix~\ref{technical appendix}, the reader will find some technical results used in the proof of the simple connectedness.

\subsection{Notation and basic definitions\label{secdefin}}

Since in this paper we want to deal with connected sets in $\R^2$ and their perimeter, it is important to describe precisely the meaning of these words. Indeed, the usual De Giorgi notion of perimeter is measure theoretic, hence it does not distinguish between sets which are equal almost everywhere; on the other hand, the topological notion of connectedness does not have the same property, because it is clearly possible that two sets are equal almost everywhere, one of them being topologically connected and the other one not. There are then two main possibilities. The first one is to use the usual De Giorgi notion of perimeter and, correspondingly, the measure-theoretic notion of connectedness; namely, a set $E$ is connected (in the measure-theoretical sense) if and only if, whenever we write $E=E_1 \cup E_2$ with $P(E)=P(E_1)+P(E_2)$, then one of $E_1$ and $E_2$ must be negligible. The second one is to use the usual topological notion of connectedness, and the Minkowski notion of perimeter. Notice that making this choice is not necessary at all if one is interested in proving the inequality~(\ref{qbi}) for ``connected'' sets, because with both choices it is immediate to reduce oneself to consider smooth sets, and for smooth sets the two possible notions of perimeter and the two possible notions of connectedness coincide. Instead, if one wants to study the existence of minimizers for the functional $\G$, then the choice is clearly relevant. In this paper, we opt for the second possibility, that is, the standard topological connectedness and the Minkowski perimeter (which will be precisely described below). Let us consider for a moment the ``modified dumbbell'' $D$ of Figure~\ref{fig:dumbbell}; as said above, the conjecture made in~\cite{BCH} is that this set is the only minimizer of the functional, and the results of the present paper are compatible with the validity of the conjecture. Notice that $D$ is topologically connected, but it is not connected in the measure-theoretical sense, because it is made of two different pieces, joined with a segment, which has zero area. In addition, the Minkowski perimeter of $D$ is the total length of the closed, oriented curve $\partial D$, so the length of the segment is counted twice; instead, the De Giorgi perimeter of $D$ is lower, because the segment is not counted at all (since from the measure-theoretical point of view it does not exist). Observe that, if the conjecture is true, then a byproduct is that there is no minimizer of $\G$ in the measure-theoretical sense.

Let us quickly describe the Minkowski notion of perimeter in the plane. Consider a closed curve, that is, a continuous map $\tau: \S^1\to\R^2$; we say that $\tau$ is \emph{non-crossing} if it is the uniform limit of injective curves $\tau_j :\S^1\to\R^2$. For every such curve there is an \emph{internal part}, that we call $\INT(\tau)$, which consists of all the points of degree $1$ or $-1$ with respect to the curve; it is standard to observe that all the points of $\R^2\setminus \tau(\S^1)$ have degree either $0$ or $\pm 1$, and in fact either the degree is always $0$ or $1$, or it is always $0$ or $-1$ (the first case occurs if the curve is counterclockwise, the second one if it is clockwise). Then, it is standard to prove that for any closed, connected planar set $E\subseteq\R^2$ of finite perimeter there exist some non-crossing curves $\tau_i:\S^1\to\R^2$ (either finitely many or countably many) in such a way that $E$ coincides with
\[
\left(\tau_1(\S^1) \cup \INT(\tau_1)\right) \setminus \bigcup_{i>1} \INT(\tau_i)\,,
\]
that for every $i\neq j$ the two open sets $\INT(\tau_i)$ and $\INT(\tau_j)$ are either disjoint or contained one into the other, and that $\tau_1(\S^1) \cup \INT(\tau_1)$ is simply connected. Moreover, the Minkowski perimeter of $E$ coincides with
\[
P(E) = \sum_i \int_{\S^1} |\tau_i'|\, d\H^1\,.
\]
In words, the sets are obtained by taking the internal part of the ``external curve'' $\tau_1$, and cutting away the internal parts of the curves $\tau_i$ with $i>1$ (so, these latter internal parts are ``holes'' in the set). In particular, a connected set $E$ is simply connected if there is just a single curve. Keep in mind that our main result says that a minimizer of the barycentric isoperimetric ratio among the connected sets exists, and it is simply connected. Notice that in general $\int_{\S^1} |\tau_i'|$ is not the length of the curve $\tau_i$, since a non-crossing curve can pass twice through a same subpath, and in that case the length of the path is counted twice. For instance, the set $D$ of Figure~\ref{fig:dumbbell} is simply connected, and the length of the horizontal segment is counted twice in the perimeter.\par

Thanks to the above discussion, dealing with connected, planar sets reduces to dealing with the curves $\tau_i$, since the connectedness, the simple connectedness, and the perimeter of any set can be studied directly through the curves. We conclude this short section with the following definition of ``simple point''.

\begin{defin}[Simple point]
Let $E$ be a closed connected set of finite perimeter in $\R^2$, and let $\tau_i$ be curves identifying $E$ as before. A point $p\in\partial E$ is said to be a \emph{simple point} if it is contained in a single curve $\tau_i$ and there is exactly one $t\in \S^1$ such that $\tau_i(t) = p$.
\end{defin}

\section{An explicit estimate}\label{secestidumb}

In this short section, we give an \emph{a priori} upper bound to the optimal value of the barycentric ratio. This bound will then allow us to exclude some competitors in the rest of the paper.
\begin{prop}\label{prop:estim_mod_dumb}
For a minimizer $E$ of the functional $\G$, it holds $\G(E)<1/4$.
\end{prop}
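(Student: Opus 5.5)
Since a minimizer $E$ satisfies $\G(E)\le \G(F)$ for every connected planar set $F$, it suffices to exhibit one explicit connected set $F$ with $\G(F)<1/4$. The natural choice is a member of the family of ``dumbbells'' suggested by Figure~\ref{fig:dumbbell}. To keep the computation elementary, I would first try the simplest version: two disjoint equal discs joined by a segment. Fix the total area equal to $\pi$, so that $B_F$ is the unit disc centered at the origin (the barycenter, by symmetry). Take two discs of radius $r=1/\sqrt2$ centered at $(\pm d,0)$, joined along the $x$-axis by the segment between them, and assume $d\ge r$ so the discs do not overlap.

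The key steps are as follows. First, the Minkowski perimeter equals $2\cdot 2\pi r$ plus twice the segment length $2(d-r)$. Hence
\[
\delta(F)=\frac{2\sqrt2\,\pi+4(d-r)-2\pi}{2\pi}\,.
\]
Second, compute $\lambda_0(F)=|F\Delta B|/\pi = 2(\pi-|F\cap B|)/\pi$. This reduces to the area of the lens formed by the unit disc and a disc of radius $r$ at distance $d$, which is given by the standard circular-segment formula. Third, minimize the ratio over $d$. The extreme choice $d=r$ (tangent discs, no segment) gives $\delta=\sqrt2-1\approx0.414$. The intersection area $|F\cap B|$ is then somewhat less than $\pi/2$, so $\lambda_0$ is a bit above $1$; roughly, $\lambda_0^2\approx 1.2$–$1.4$. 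This yields $\G\approx 0.3$, which is not quite enough.

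The main obstacle is therefore to get a competitor good enough to go below $1/4$. With $d$ large, the segment cost grows linearly while $\lambda_0$ is bounded by $2$. Thus $\G\ge \delta/4$, and one needs $\delta<1$ together with $\lambda_0$ close to $2$. For two discs of half area, $\delta\ge\sqrt2-1$, so one wants $d$ such that $\lambda_0\approx 2$. That requires the discs to be nearly outside $B$, i.e. $d\approx 1+r\approx1.71$. Then $\delta=\sqrt2-1+4(1)/2\pi\approx 1.05$, which gives $\G\approx 0.26$: still borderline.

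So I would instead use the genuine modified dumbbell of Figure~\ref{fig:dumbbell}. Its lobes are two large circular arcs lying almost entirely outside $B$. These are glued to the segment by small concave arcs, which are tangent so that the perimeter is saved near the junctions, and the lobes hug $\partial B$ only at the corners $(\pm0.965,\pm0.264)$. The figure suggests explicit parameters: lobe radius $\approx0.698$, connecting arcs of radius $\approx0.424$, angles $22.24^\circ$. I would compute area, perimeter and $|D\cap B|$ exactly from these arcs and sectors, rescale the area to $\pi$, and verify numerically that $\G(D)<1/4$ with a safe margin, keeping rigorous bounds on each trigonometric quantity. If the margin turns out thin, I would optimize one free parameter (the lobe center distance) to improve it. The delicate part is this careful bookkeeping of the circular pieces, not any conceptual difficulty.
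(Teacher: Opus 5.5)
Your final plan is the same as the paper's proof. The paper takes exactly the modified dumbbell of Figure~\ref{fig:dumbbell}:
\begin{itemize}
\item outer arcs of curvature $\beta$, with junction points on $\partial B$;
\item concave arcs of radius $r_{in}=\sin\vartheta/(\beta(1-\sin\vartheta))$, tangent to the horizontal axis;
\item an angle $\vartheta_0$ chosen by continuity so that the area is exactly $\pi$;
\item closed formulas for $P$ and $|D\setminus B|$.
\end{itemize}
At $\beta=1.43$, $\vartheta_0\approx 21^\circ$ this gives $\delta\approx0.919$, $\lambda_0\approx1.922$ and $\G\approx0.2488$. Your two-disc warm-up plays no role.

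However, as written the proposal stops where the proof begins. The statement \emph{is} the numerical evaluation, and you only announce it. Three points need fixing when you carry it out.

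\emph{The margin is not safe.} It is of order $10^{-3}$, and it does not improve appreciably within this family: already $\beta=1.42$ or $\beta=1.45$, with the corresponding $\vartheta_0$, give values above $1/4$. So every term must be computed accurately.

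\emph{The area of the pictured parameters is not $\pi$.} If you use the values read off the picture ($\vartheta=22.24^\circ$), the area comes out about $3.151$. Rescaling is harmless for $\G$, but then the barycentric ball is the disc $B_D$ of area $|D|$ centered at the origin, not the unit disc $B$ drawn in the figure. You must therefore compute $|D\setminus B_D|$, not $|D\setminus B|$; done correctly, this still gives a value just below $1/4$. Imposing $|D|=\pi$ through the intermediate value theorem, as the paper does, avoids the issue.

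\emph{Your tangent half-area discs estimate is off.} There $|F\setminus B|=1$ exactly, so $\lambda_0=2/\pi\approx0.64$ and $\G=(\sqrt2-1)\pi^2/4\approx1.02$, not $0.3$. This is harmless since you discard that competitor, but it is the kind of slip a margin of $10^{-3}$ cannot absorb.
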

\begin{proof}
The proof consists basically of an explicit estimate of the functional for a modified dumbbell.
Consider a parameter $\beta$, which will be the curvature of the dumbbell outside the unit ball.
Given an angle $\vartheta\in [0,\pi/4]$, we can denote by $D_{\beta,\vartheta}$ the dumbbell whose top-right quarter is represented in Figure~\ref{fig:quarter of dumbbell} (namely, the set $D_{\beta, \vartheta}$ is the two-symmetric set obtained starting from the set represented in the picture).

\begin{figure}[!ht]
\centering
\begin{tikzpicture}[scale=2.5]
\begin{scope}
\clip (0,-0.01) rectangle (2.5, 1.2);
\draw[dashed] (0,0) circle (1);
\draw[very thick] (0.965,-0.264) arc[start angle=-157.76, end angle=157.76, radius=0.698];
\draw[very thick] (-0.58,0) -- (0.58,0);
\end{scope}
\fill (0.58,0) circle (.7pt);
\draw[very thick] (0.965,0.264) arc[start angle=-22.24, end angle=-90, radius=0.424];
\draw (2.1,0.9) node[anchor=north east] {$D_{\beta,\vartheta}$};
\draw (0.8,1.1) node[anchor=north east] {$B$};
\draw (0,0)--(0.965,0.264);
\draw (1.611,0)--(0.965,0.264);
\begin{scope}
\clip (0,0)--(0.965,0.264)--(1.611,0)--(0,0);
\draw (0,0) circle (0.4);
\draw (1.611,0) circle (0.3);
\end{scope}
\draw (0,0) -- (1.611,0);
\fill (1.611,0) circle (.7pt);
\draw (1.31,0.28) node {$1/\beta$};
\draw (1.2,0.08) node {$\vartheta$};
\draw (0.5,0.065) node {$\varphi$};
\end{tikzpicture}
\caption{Representation of the first-quadrant quarter of a dumbbell.}
\label{fig:quarter of dumbbell}
\end{figure}

In particular, we can express the internal curvature in terms of $\beta$ and $\vartheta$, simply imposing that the arc of circle which is inside the unit ball is tangent to the horizontal axis, thus finding
\[
r_{in}=\frac{\sin \vartheta}{\beta(1-\sin\vartheta)}.
\]
Moreover, we can also express the area of $D_{\beta, \vartheta}$ in terms of $\beta$ and $\vartheta$ as
\begin{equation}\label{eq: area dumbbell}
|D_{\beta,\vartheta}|=2\left(\frac{1}{\beta^2}(\pi-\vartheta)+\left(r_{in}+\frac{1}{\beta}\right)^2\sin\vartheta\cos\vartheta-r_{in}^2\left(\frac{\pi}{2}-\vartheta\right)\right).
\end{equation}
Now we can show that, for any given $\beta\in\left(\sqrt{2},3/2\right)$, there exists a value of $\vartheta$ such that $|D_{\beta, \vartheta}|=\pi$.
Indeed, plugging the expression for $r_{in}$ into~\eqref{eq: area dumbbell} we obtain
\[
\begin{aligned}
|D_{\beta,\vartheta}|=& 2\left(\frac{1}{\beta^2}(\pi-\vartheta)+\left(\frac{\sin \vartheta}{\beta(1-\sin\vartheta)}+\frac{1}{\beta}\right)^2\sin\vartheta\cos\vartheta-\left(\frac{\sin \vartheta}{\beta(1-\sin\vartheta)}\right)^2\left(\frac{\pi}{2}-\vartheta\right)\right)\\
=&\frac{2}{\beta^2}\left(\pi -\vartheta+\frac{\sin\vartheta\cos\vartheta}{(1-\sin\vartheta)^2}-\frac{\sin^2\vartheta}{(1-\sin\vartheta)^2}\left(\frac{\pi}{2}-\vartheta\right)\right).
\end{aligned}
\]
Now, computing this for $\vartheta=0$ and $\vartheta=\pi/5$, we find
\[
|D_{\beta,0}|=\frac{2\pi}{\beta^2},\qquad |D_{\beta,\frac{\pi}{5}}|=\frac{2}{\beta^2}\left(\frac{4\pi}{5}+\frac{\sin(\pi/5)}{\big(1-\sin(\pi/5)\big)^2}\left(\cos(\pi/5)-\frac{3\pi}{10}\sin(\pi/5)\right)\right),
\]
which, for $\beta\in(\sqrt{2}, 1.47)$, give us $|D_{\beta,0}|<\pi$ and $|D_{\beta,\pi/5}|>\pi$.
By continuity of $|D_{\beta, \vartheta}|$, we can conclude that for at least one value of $\vartheta$, that we call $\vartheta_0$, we have the desired area.
Let us denote by $D_{\beta}=D_{\beta, \vartheta_0}$ such set, that satisfies the area constraint.

Then, we can express and compute the functional $\G$ in terms of $\beta$ and $\vartheta_0$.
Let us start with the perimeter of $D_\beta$.
First, we introduce an auxiliary variable, that is, the angle $\varphi$ in Figure~\ref{fig:quarter of dumbbell}, which is given by
\[
\varphi=\arcsin\left(\frac{1}{\beta}\sin\vartheta_0\right).
\]
Now some easy trigonometrical computations lead us to the following formula for the perimeter:
\begin{equation*}
P(D_{\beta})=4\cdot\left(\frac{1}{\beta}(\pi -\vartheta_0)+r_{in}\left(\frac{\pi}{2}-\vartheta_0\right)+\cos\varphi-r_{in}\cos\vartheta_0 \right),
\end{equation*}
where, inside the parentheses, the first term is the length of the outer arc of circle, the second one is the length of the inner arc of circle and the rest is the length of the segment connecting the arc to the origin.
Hence, the isoperimetric deficit of $D_{\beta}$ is given by
\begin{equation}\label{eq: deficit dumbbell}
\delta(D_{\beta})=\frac{2}{\pi}\left(\frac{1}{\beta}(\pi -\vartheta_0)+r_{in}\left(\frac{\pi}{2}-\vartheta_0\right)+\cos\varphi-r_{in}\cos\vartheta_0 \right)-1.
\end{equation}
Now we can compute the barycentric asymmetry.
It is easy to compute the area of $D_{\beta}\setminus B$ as 
\[
|D_{\beta}\setminus B|=4\cdot \left(\frac{1}{2\beta^2}(\pi-\vartheta_0)+\frac{1}{2\beta}\sin\vartheta_0\left(\cos\varphi+\frac{1}{\beta}\cos\vartheta_0\right)-\frac{\varphi}{2}\right).
\]
Hence, taking into account that $|D_{\beta}\Delta B|=2|D_{\beta}\setminus B|$, we have
\begin{equation}\label{eq: asimmetria dumbbell}
\lambda_0(D_{\beta})=\frac{8}{\pi}\left(\frac{1}{2\beta^2}(\pi-\vartheta_0)+\frac{1}{2\beta}\sin\vartheta_0\left(\cos\varphi+\frac{1}{\beta}\cos\vartheta_0\right)-\frac{\varphi}{2}\right).
\end{equation}
Finally, we choose $\beta=1.43$; then, an admissible value of $\vartheta_0$ (that is, a value such that $|D_{\beta,\vartheta}|=\pi$) is $\vartheta_0\approx 21^\circ$. With this choice of $\beta$ and $\vartheta_0$ (which are the values used to plot Figure~\ref{fig:dumbbell}), \eqref{eq: deficit dumbbell} and~\eqref{eq: asimmetria dumbbell} give $\delta(D_{\beta})\approx 0.919$ and $\lambda_0(D_\beta)\approx 1.922$, thus finding $\G(D_\beta)\approx 0.2488<1/4$.
\end{proof}
% \begin{figure}[!ht]
% \centering
% \begin{tikzpicture}[scale=1.2]
% \draw[dashed] (0,0) circle (1);
% \draw[thick] (-0.965,0.264) arc[start angle=22.24, end angle=337.76, radius=0.698];
% \draw[thick] (-0.965,0.264) arc[start angle=202.24, end angle=270, radius=0.424];
% \draw[thick] (-0.965,-0.264) arc[start angle=157.76, end angle=90, radius=0.424];
% \draw[thick] (0.965,-0.264) arc[start angle=-157.76, end angle=157.76, radius=0.698];
% \draw[thick] (0.965,0.264) arc[start angle=-22.24, end angle=-90, radius=0.424];
% \draw[thick] (0.965,-0.264) arc[start angle=22.24, end angle=90, radius=0.424];
% \draw[thick] (-0.58,0) -- (0.58,0);
% \draw (-2.1,.8) node[anchor=north east] {$D$};
% \draw (1,1.1) node[anchor=north east] {$B$};
% 		\end{tikzpicture}
% \caption{A quite-accurate drawing of the dumbbell $D_{1.43}$.}
% \label{fig:dumbbell 1}
% \end{figure}

\section{The formula for the curvature}\label{seccurv}

This section is devoted to show a formula for the curvature of the boundary of a minimizer of the ratio $\G$. Notice that this is an \emph{a priori} result, since up to now we do not know that a minimizer actually exists. The result is the following.
\begin{theorem}\label{formulaabc}
Let $E$ be a minimizer of $\G$ among connected sets of area $\pi$ and barycenter in the origin. Then, each path $\tau_i:\S^1\to\R^2$ is a ${\rm C}^{1,1}$ curve and there are three constants $\alpha,\, \beta,\,\gamma$ such that, up to a rotation, the curvature $\kappa(x,y)$ at any simple point $(x,y)\in\partial E\setminus \partial B$ satisfies
\begin{equation}\label{abc}
\kappa(x,y)=\alpha x + \beta - \gamma \Chi{B}\,,
\end{equation}
where the constant $\gamma$ is given by
\begin{equation}\label{formulagamma}
\gamma= 8\, \frac{\delta (E)}{\lambda_0(E)}=8\,\lambda_0(E)\, \G (E)\,.
\end{equation}
\end{theorem}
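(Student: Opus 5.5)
We will derive \eqref{abc} as the Euler--Lagrange equation of $\G$ under area-preserving variations. These are localized near simple points of $\partial E\setminus\partial B$, after normalizing $|E|=\pi$ and the barycenter at the origin, so that $B_E=B$ is the unit disc. The ${\rm C}^{1,1}$ regularity will come from the same first-variation identity.

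Fix a simple point $p\in\partial E\setminus\partial B$. Near $p$ the boundary is a single arc of one curve $\tau_i$, and small perturbations of this arc keep the set connected and keep the decomposition into curves unchanged. Perturb the arc by a smooth compactly supported normal field $\varphi$, and compose with a dilation to restore $|E|=\pi$. For such $E_t$ we compute the first variations:
\[
\frac{d}{dt}P(E_t)=\int\kappa\varphi,\qquad \frac{d}{dt}|E_t|=\int\varphi,\qquad \frac{d}{dt}\,{\rm bar}(E_t)=\frac1\pi\int (x,y)\,\varphi .
\]
The barycenter moves, so $B_{E_t}$ is the unit disc translated by ${\rm bar}(E_t)$. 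The variation of $|E_t\Delta B_{E_t}|$ then has two parts. The local part is $\int(\Chi{B^c}-\Chi{B})\varphi$, which is legitimate because $p\notin\partial B$. The part from moving the ball is $-\,v\cdot\int_{\partial B}(\Chi{E}-\Chi{E^c})\,\nu$ with $v=\frac{d}{dt}{\rm bar}$, which is linear in $\int(x,y)\varphi$.

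Minimality gives $\frac{d}{dt}\G(E_t)=0$, that is $\delta'/\delta=2\lambda_0'/\lambda_0$. Writing $\delta=P/(2\pi)-1$ and collecting terms, we get, for every admissible $\varphi$,
\[
\int\Big(\kappa-c_0-\frac{8\pi\,\delta}{2\pi\lambda_0}\big(\Chi{B^c}-\Chi{B}\big)\cdot\frac{1}{\ldots}-a\cdot(x,y)\Big)\varphi=0 .
\]
Here $c_0$ is the Lagrange multiplier for the area and $a\in\R^2$ is a fixed vector coming from the barycenter term. The dilation adjustment only contributes to the constant $c_0$. This yields $\kappa=a\cdot(x,y)+\beta'-\gamma\Chi{B}$ weakly. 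Rotating so that $a$ is parallel to the $x$-axis gives \eqref{abc}.

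The constant $\gamma$ comes from the jump coefficient. We have $\frac{d}{dt}P=2\pi\,\delta'$ and $\lambda_0=|E\Delta B|/\pi$. The jump of $\Chi{B^c}-\Chi{B}$ is $2$, so
\[
\gamma=2\pi\cdot\frac{2\delta}{\lambda_0}\cdot\frac{2}{\pi}=\frac{8\delta}{\lambda_0},
\]
which is \eqref{formulagamma}. This bookkeeping must be checked carefully.

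For regularity, the weak identity shows that along each arc of simple points the curvature is a bounded function in the distributional sense, so the arc is ${\rm C}^{1,1}$. The main obstacle is the non-simple points: points where a curve touches itself or another curve, and points on $\partial B$. At points on $\partial B$ the local part is merely bounded, so ${\rm C}^{1,1}$ still holds there. At multiple points the plan is a one-sided variation argument. Pushing one of the touching arcs inward only, away from the other, is admissible, and it gives an inequality on the curvature, a one-sided bound. Comparing the two arcs, which are tangent by this one-sided bound, should exclude corners and give ${\rm C}^{1,1}$ across contact points. Ruling out corners would require a cut-the-corner competitor that decreases perimeter to first order, which beats the Lipschitz change in $\lambda_0$ and in area. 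Together these give the ${\rm C}^{1,1}$ claim for each $\tau_i$.
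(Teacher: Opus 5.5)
Your computation of the jump $\gamma=8\delta/\lambda_0$ is correct, but the derivation of the Euler--Lagrange equation has a genuine gap. You let area and barycenter move and then differentiate the ball term. Concretely, you assert that translating the barycentric ball (and, through your dilation, rescaling it) changes $|E\Delta B_{E_t}|$ at a rate linear in $v$, namely $-v\cdot\int_{\partial B}(\chi_E-\chi_{E^c})\nu$. This holds only if $\H^1(\partial E\cap\partial B)=0$, which is not known. The statement says nothing about $\partial B$, and $\partial E$ may well contain arcs of $\partial B$; the existence proof even uses that the curvature equals $1$ on $\partial E_j\cap\partial B$. If $A\subseteq\partial E\cap\partial B$ is an arc, the first-order expansion of $w\mapsto|E\Delta(B+w)|$ contains the non-linear term $\pm\int_A|w\cdot\nu|\,d\H^1$ (compare $|B\Delta(B+w)|\approx 4|w|$). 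Likewise, $r\mapsto|E\Delta B_r|$ has a corner at $r=1$. So $t\mapsto\G(E_t)$ is not differentiable at $t=0$ as soon as $\int\varphi\neq0$ or $\int(x,y)\varphi\neq0$, and minimality only gives one-sided inequalities. This case cannot be dismissed. If $E$ lies outside $B$ along $A$, the corner of $\lambda_0$ is concave, i.e.\ it raises $\G$ in both directions, which is compatible with minimality but destroys the equation. Hence neither your explicit vector $a$ nor your area multiplier $c_0$ is justified. Since your variations live near a single point, nothing else ties together the constants found at different points.

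The missing idea, which is the paper's, is to use only variations that preserve the area exactly and the barycenter to first order, so the ball term never needs to be differentiated. Add masses $\eps m_i$ through bumps shrinking to four simple points $p_i\in\partial E\setminus\overline B$, any three of them non-collinear, with $\sum_i m_i=0$ and $\sum_i m_ip_i=0$. Then $|E_\eps|=\pi$ and $|\Bar(E_\eps)|=o(\eps)$, hence $|B\Delta B_{E_\eps}|=o(\eps)$ and $\lambda_0(E_\eps)=\lambda_0(E)+o(\eps)$, whatever $\partial E\cap\partial B$ looks like. Minimality gives $\sum_i m_i\kappa(p_i)=0$ for every such $(m_i)$. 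So $\kappa(p_i)=ax_i+by_i+c$ with $a,b,c$ fixed by $p_1,p_2,p_3$ alone, hence the same at every simple point outside $\overline B$. Putting instead $p_4\in B$ with mass $\eps$, so that $\lambda_0$ drops by $2\eps/\pi$, produces your jump $8\delta/\lambda_0$. The price is that $a,b$ are unknown Lagrange multipliers rather than an explicit vector.

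As for regularity, deriving bounded generalized curvature from vector-field variations at simple points is a reasonable alternative to the paper's $\Lambda$-minimality lemma. There only Lipschitz control of $\lambda_0$ is needed, so the issue above does not arise. It also preserves connectedness automatically, whereas the paper has to restore it by passing to a connected component of each competitor. But you must work with vector fields, not with a normal function $\varphi$, which presupposes the regularity you are after. Your treatment of non-simple points is only a plan, and it overlooks that $\partial E\setminus\partial^*E$ may consist of whole segments traversed twice (the handle of the dumbbell). The paper settles that case by observing that these are straight segments meeting $\partial^*E$ tangentially.
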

\begin{remark}
It is not hard to show that a minimizer $E$ of $\G$ is a dumbbell (possibly without segment) if and only if the constant $\alpha$ in~\eqref{abc} vanishes.
Indeed, it is clear that a dumbbell satisfies the condition $\alpha=0$ because it must hold $\kappa(x,y)=\kappa(-x,y)$.
Reversely, by a standard use of the so-called double spherical cap symmetrization (see e.g.~\cite{G}), it can be easily proved that the condition $\alpha=0$ implies that the set $E$ is a dumbbell.
\end{remark}

To prove this result, we will rely on a technical result, namely, we show that the set $E$ must be a $\Lambda$-minimizer. We briefly recall here the (standard) definition of the $\Lambda$-minimality.
\begin{defin}[$\Lambda$-minimality]
A set $E\subseteq\R^N$ of finite perimeter is said to be a \emph{$\Lambda$-minimizer at a point $x\in\R^N$} for a constant $\Lambda>0$ if there exists $r_0=r_0(x)>0$ such that for every ball $B(x,r)$ with $r<r_0$ and for every set $F$ such that $F\Delta E \comp B(x,r)$, one has $P(E)\leq P(F)+ \Lambda r^N$. The set $E$ is said to be a \emph{$\Lambda$-minimizer} if it is a $\Lambda$-minimizer at \emph{every} point $x\in\partial E$.
\end{defin}
The exponent $N$ in $r^N$ specializes to $2$ throughout this paper, since we work in $\R^2$. We can then prove the $\Lambda$-minimality of $E$ at points of the reduced boundary.

\begin{lemma}\label{Lambda-min lemma}
Let $E\subseteq\R^2$ be as in Theorem~\ref{formulaabc}. Then $E$ is a $\Lambda$-minimizer at all points of $\partial^* E$.
\end{lemma}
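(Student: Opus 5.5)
We prove that $E$ is a $\Lambda$-minimizer at every $x\in\partial^*E$ by comparing $E$ with small local competitors. Fix $x\in\partial^*E$ and a competitor $F$ with $F\Delta E\comp B(x,r)$. We then modify $F$ so that it is again admissible, meaning connected with area $\pi$; the barycenter condition is harmless, since $\G$ is translation invariant. Minimality of $E$ will then bound $P(E)-P(F)$ by the change in the asymmetry term, which is of order $r^2$.

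\emph{Step 1: reduction to nice competitors.} We may assume $P(F)\le P(E)$, otherwise there is nothing to prove. Since $x\in\partial^*E$, for $r$ small the blow-up of $E$ at $x$ is close to a half-plane. A standard argument (modifying $F$ inside $B(x,r)$ at a cost $o(r)$ in perimeter, for instance by intersecting or uniting with small balls) should let us assume that $F$ is still topologically connected. We expect this step to be the main obstacle, because topological connectedness and the Minkowski perimeter are not stable under measure-theoretic modifications. What we would try first is this: replace $F$ by $F\cup(E\setminus B(x,r))$ together with the portion of $\partial E\cap B(x,r)$ needed to keep the pieces attached. Alternatively, use that inside $B(x,r)$ at a reduced-boundary point $E$ separates the ball into essentially two regions. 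Then argue that any component of $F$ not meeting $\R^2\setminus B(x,r)$ can be discarded, which does not increase the perimeter and changes the area by at most $\pi r^2$.

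\emph{Step 2: area fixing.} We restore the area with a fixed volume-fixing variation. Choose a point $y\in\partial^*E$ far from $x$ and a smooth vector field supported near $y$. The associated diffeomorphisms $\Phi_t$ satisfy $|\Phi_t(F)|=\pi$ for some $|t|\le C\,|F\Delta E|\le C r^2$, at a perimeter cost $|P(\Phi_t(F))-P(F)|\le C|t|\le Cr^2$. These diffeomorphisms also preserve connectedness and change the barycenter by $O(r^2)$. Call the resulting admissible set $\tilde F$.

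\emph{Step 3: comparison.} Minimality gives $\G(E)\le\G(\tilde F)$. Write $\delta=\frac{P}{2\pi}-1$ for area $\pi$. Since the barycenter of $\tilde F$ differs from that of $E$ by $O(r^2)$, the ball $B_{\tilde F}$ differs from $B_E$ by $O(r^2)$ in measure, so $|\lambda_0(\tilde F)-\lambda_0(E)|\le Cr^2$. We have $\lambda_0(E)>0$, because $E$ is not a ball: a ball gives $\G=0/0$, excluded since $\G(E)<1/4$ by Proposition~\ref{prop:estim_mod_dumb}. Expanding $\delta(\tilde F)\ge \G(E)\lambda_0(\tilde F)^2\ge \delta(E)-Cr^2$ then gives
\[
P(E)\le P(\tilde F)+Cr^2\le P(F)+\Lambda r^2,
\]
with $\Lambda$ depending only on $E$ and the fixed variation. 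The radius $r_0$ is taken small enough that $B(x,r_0)$ stays away from the support of the volume-fixing field; since two different choices of $y$ cover all $x$, the constant $\Lambda$ can be taken uniform.
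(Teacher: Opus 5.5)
Your Steps 2 and 3 are sound. The gap is Step 1, which you leave as ``should let us assume''. It is the one non-routine part of this lemma, because the competitor must be topologically connected.

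Your first suggestion cannot work. The $\Lambda$-minimality inequality only tolerates an error $\Lambda r^2$, so a modification costing $o(r)$ in perimeter is useless. Gluing pieces back together with a portion of $\partial E\cap B(x,r)$ is worse: in the Minkowski perimeter it costs about twice the length of that portion, i.e.\ order $r$. Note also that $F\cup(E\setminus B(x,r))=F$ already.

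Your second suggestion is the right move: discarding the components of $F$ that stay inside the ball costs no perimeter. But it produces a connected set only if $F\setminus B(x,r)=E\setminus B(x,r)$ is connected, and connectedness of $E$ does not give this for an arbitrary radius. If $E\setminus B(x,r)$ has several components, $F$ may have several components all reaching the exterior. None of them can be removed without changing $F$ outside $B(x,r)$.

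The missing idea, which is how the paper closes this step, is to enlarge the ball slightly. Since $x\in\partial^*E$, for $r$ small one picks $r'\in[r,2r]$ such that $E\cap\partial B(x,r')$ is a single arc. Then $E\setminus B(x,r')$ is connected. Indeed, a relatively clopen piece of it disjoint from the arc would lie in the open set $\R^2\setminus\overline{B(x,r')}$, hence would be relatively open and closed in $E$, contradicting connectedness of $E$. So $F\setminus B(x,r')$ is connected too. Let $F'$ be the component of $F$ containing it. Then $P(F')\le P(F)$, $F'\Delta E\comp B(x,r')$ and $r'^2\le 4r^2$, so the constant only changes by a factor $4$.

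With this inserted, your argument is complete. It differs from the paper only in how area is restored. The paper rescales and translates, $\widetilde E=\sqrt{\pi/|F|}\,(F-\Bar F)$, using $\sqrt{\pi/|F|}-1=O(r^2)$ and $|\Bar F|=O(r^2)$. That trivially preserves connectedness and needs no auxiliary boundary point $y$ with nonzero flux. Your local volume-fixing diffeomorphism is the standard device for this, and it gives the direct form of the same $O(r^2)$ bookkeeping rather than the contradiction form.
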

\begin{proof}
Let $E$ be a minimizer of $\G$ among connected sets of area $\pi$ and barycenter in the origin, and let $\Lambda$ be a constant to be specified later. Assume that $E$ is not a $\Lambda$-minimizer at some point $x\in\partial^* E$. Then, there exist a sequence of radii $r_j\to 0$ and of sets $F_j$ such that for any $j$ the symmetric difference $E\Delta F_j$ is compactly contained in $B(x,r_j)$, and moreover
\begin{equation}\label{assumption}
P(E) > P(F_j) + \Lambda r_j^2\,.
\end{equation}
We claim that, at least for $j$ large enough, it is admissible to think that $F_j$ is connected. Indeed, since $x\in\partial^* E$, as soon as $j$ is large enough there exists a radius $r_j\leq r_j'\leq 2r_j$ such that the intersection of $E$ with the circle $\partial B(x,r_j')$ is exactly an arc of the circle (with length almost exactly half of the total length of the circle). As a consequence, since $E$ is connected then so is also $E\setminus B(x,r_j')=F_j\setminus B(x,r_j')$. Therefore, if we call $F_j'$ the connected component of $F_j$ containing $F_j\setminus B(x,r_j')$, then we have that $E\Delta F_j'$ is compactly contained in $B(x,r_j')$, and since $P(F_j')\leq P(F_j)$ then~(\ref{assumption}) holds also with $F_j'$ and $r_j'$ in place of $F_j$ and $r_j$, up to change $\Lambda$ with $\Lambda/4$. We have then proved our claim.\par

We want to build competitors for $\G$ starting from the sets $F_j$. Fix $j\in\N$ and denote, for the sake of readability, $F=F_j$ and $r=r_j$. Then, we define $\widetilde E$ the set obtained from $F$ with a translation and a rescaling so as to have area $\pi$ and barycenter in the origin. In other words,
\[
\widetilde E = \sigma \big( F - v\big)\,,
\]
where
\begin{align*}
\sigma = \sqrt{\frac \pi{|F|} }\,, && v = \Bar F\,.
\end{align*}
In particular, since $F$ is connected than the set $\widetilde E$ is a competitor for $\G$, and by minimality of $E$ we have $\G(\widetilde E)\geq \G(E)$. Our goal is to find a contradiction with this inequality as soon as $r$ is small enough and $\Lambda$ is large enough. To estimate $\G(\widetilde E)$, we need an upper bound for $\delta(\widetilde E)$ and a lower bound for $\lambda_0(\widetilde E)$. First of all, we can easily give bounds to $\sigma$ and $v$; indeed, since $|E|=\pi$ and $E\Delta F$ is compactly contained in a ball of radius $r$, then $\pi(1-r^2)\leq |F|\leq \pi (1+r^2)$, which for $r$ small enough leads to
\begin{equation}\label{estisigma}
1 - \frac{r^2} 2 \leq \sigma \leq 1 + r^2\,.
\end{equation}
In an analogous way, to estimate $|v|$, we can let $\overline R$ be any number larger than the diameter of $E$, and notice that, as soon as $r$ is small enough (and then the ball $B(x,r)$ has distance less than $\overline R$ from the origin), since $\Bar E=0$ we easily have
\[
|F| \Bar F = \int_F z\, dz = \int_{F\setminus E} z\, dz - \int_{E\setminus F} z\,dz\,,
\]
and then
\[
|F| |v| \leq \pi r^2 \overline R\,,
\]
which, recalling again that $|F|\geq \pi (1-r^2)$, gives
\begin{equation}\label{estiv}
|v| \leq 2 r^2 \overline R\,.
\end{equation}
To estimate $\delta(\widetilde E)$, we just have to use that $P(\widetilde E) = \sigma P(F)$, so that by~(\ref{assumption}) and~(\ref{estisigma}) we get
				\begin{equation}\label{too}
					\delta(\widetilde E) <\frac{\sigma P(E)-\sigma \Lambda r^2-2\pi}{2\pi}=\sigma \delta(E)+\sigma-1-\frac{\sigma \Lambda}{2\pi}\, r^2
					\leq \delta(E)+ \bigg(\delta(E) +1-\frac \Lambda{4\pi}\bigg)\, r^2\,,
				\end{equation}
				where we also used~(\ref{estisigma}) and the fact that $r$ is small.\par
				
				To estimate $\lambda_0(\widetilde E)$, instead, we first notice that, since $F\Delta E$ is compactly contained in a ball of radius $r$, we have
				\[
				\big| F \Delta B\big| \geq \big| E\Delta B\big| - \big|E \Delta F\big| \geq \pi \lambda_0(E) - \pi r^2\,,
				\]
				then we notice that by the definition of $\sigma$ and~(\ref{estiv}) we have
				\[\begin{split}
					\big| (F-v) \Delta B_{1/\sigma}\big| &= \big| F \Delta (B_{1/\sigma}+v)\big| \geq \big| F\Delta B\big| - \big| B\Delta B_{1/\sigma}\big|- \big| B_{1/\sigma}\Delta (B_{1/\sigma}+v)\big|\\
					&\geq \pi \lambda_0(E) - \pi r^2-\pi \bigg|\frac 1{\sigma^2}-1\bigg|-\frac 4\sigma\, |v|
					\geq \pi \Big(\lambda_0(E) - (2+4\overline R)r^2\Big)
				\end{split}\]
				and then finally that
				\[\begin{split}
					\lambda_0(\widetilde E) &= \frac{\big|(\sigma(F-v))\Delta B\big|}\pi
					=\sigma^2\, \frac{\big|(F-v)\Delta B_{1/\sigma}\big|}\pi
					\geq (1-r^2) \Big(\lambda_0(E) - (2+4\overline R)r^2\Big)\\
					&\geq \lambda_0(E)- r^2 \big( \lambda_0(E) + 2+4\overline R\,\big)\,.
				\end{split}\]
				Since this implies
				\[
				\frac 1{\lambda_0^2(\widetilde E)} \leq \frac 1{\Big(\lambda_0(E)- r^2 \big( \lambda_0(E) + 2+4\overline R\,\big)\Big)^2}
				\leq \lambda_0^{-2}(E) + 3 r^2 \big( \lambda_0(E) + 2+4\overline R\,\big)\lambda_0(E)^{-3}
				\]
				as soon as $r$ is small enough, putting this last estimate together with~(\ref{too}) we get
				\[\begin{split}
					\G(\widetilde E) &= \frac{\delta(\widetilde E)}{\lambda_0^2(\widetilde E)}
					\leq \bigg(\delta(E)+ \bigg(\bal\delta(E) +1-\frac \Lambda{4\pi}\eal\bigg)\, r^2\bigg) \, \Big(\lambda_0^{-2}(E) + 3 r^2 \big( \lambda_0(E) + 2+4\overline R\big)\lambda_0(E)^{-3}\Big)\\
					&\leq \G(E) + r^2 \bigg( \frac{\delta(E) +1-\frac \Lambda{4\pi}}{\lambda_0^2(E)}+ \frac{3\delta(E) \big(\lambda_0(E) + 2+4\overline R\big)}{\lambda_0^3(E)}\bigg)\,,
				\end{split}\]
				where the last inequality is true as soon as we have chosen $\Lambda> 4\pi(\delta(E)+1)$. Keeping in mind that $\G(E)\leq \G(\widetilde E)$ by minimality, we find the required contradiction as soon as we have chosen
				\[
				\Lambda> 4\pi \bigg(4 \delta(E) +1 + \frac{\delta(E)}{\lambda_0(E) } \big(6+12\overline R\big)\bigg)\,.
				\]
				This finally concludes the proof.
			\end{proof}
			
			The $\Lambda$-minimality given by the above lemma allows us to use classical regularity arguments, so deducing that the reduced boundary of any minimizer $E$ is ${\rm C}^{1,1}$. Keep in mind that, since we are interested in the Minkowski perimeter, the length of the reduced boundary can be less than the perimeter; more precisely, the perimeter coincides with the length of the reduced boundary plus twice the length of $\partial E\setminus \partial^* E$. However, it is straightforward that, if $E$ is a minimizer, then necessarily the set $\partial E\setminus \partial^* E$ is a union of (possibly degenerate) segments, whose endpoints meet $\partial^* E$ tangentially. Therefore, each of the paths $\tau_i$ defining $E$ is a ${\rm C}^{1,1}$ curve. The regularity of the reduced boundary implies in particular that we can consider the curvature at all simple points of the boundary (at least in a distributional sense, in principle), and then we can proceed to the proof of Theorem~\ref{formulaabc}. 

Before doing so, we briefly explain what we will mean by ``smoothly adding'' mass around a simple point of the boundary, which is something widely used in this kind of constructions. Let $F$ be a subset of $\R^2$, smooth in a small neighborhood of a simple point $P\in \partial F$. Up to a rotation, in a small ball of radius $r$ around $P$ the boundary of $F$ coincides with the graph of some regular function $\psi: I \to \R$, for a suitable interval $I\subseteq \R$. Let us now fix an arbitrary function $u \in {\rm C}^\infty_0(I)$ such that $\int_I u =1$; by saying that we ``smoothly add'' a mass $\eps$ (with $|\eps|\ll 1$) to $F$ we will then mean that we consider the set $F_\eps$ which coincides with $F$ outside of the ball $B(P,r)$, and whose boundary in $B(P,r)$ coincides with the graph of $\psi+\eps u$, see Figure~\ref{fig: smooth add mass}. It is easy but important to notice that, regardless of the particular choice of the function $u$, the following formulas for the area and perimeter of $F_\eps$ hold,
			\begin{align}\label{simfor}
				|F_\eps| = |F| + \eps\,, && P(F_\eps) = P(F) + \eps \kappa(P) + o(\eps)\,,
			\end{align}
			where $\kappa(P)$ is the curvature of $\partial F$ at $P$.
			\begin{figure}[!h]
				\begin{center}
					\begin{tikzpicture}
						\draw (0,0) circle (1);
						\filldraw(0,-0.05) circle (1.5pt)node[below] {$P$};
						\draw(2.85,-0.7) arc[start angle=60, end angle=120, radius=5];
						\node at (2.9,-0.4) {$\partial F$};
						\node at (1.65,0.4) {$B(P,r)$};
						%\draw (0.5, -0.05) .. controls (0.4, 0.7) and (-0.3, 0.5) .. (-0.5, -0.1);
					\end{tikzpicture}
					\hspace{1.5cm}
					\begin{tikzpicture}
						\draw (0,0) circle (1);
						\filldraw(0,-0.05) circle (1.5pt)node[below] {$P$};
						\begin{scope}
							\clip (0,0) circle (0.5);
							\draw[dashed](2.85,-0.7) arc[start angle=60, end angle=120, radius=5];
						\end{scope}
						\begin{scope}
							\clip(2.85,-0.75) rectangle (0.58,0.05); 
							\draw(2.85,-0.7) arc[start angle=60, end angle=120, radius=5];
						\end{scope}
						\begin{scope}
							\clip(-3,-0.7) rectangle (-0.55,0.05); 
							\draw(2.85,-0.7) arc[start angle=60, end angle=120, radius=5];
						\end{scope}
						\node at (2.9,-0.4) {$\partial F_\varepsilon$};
						\node at (1.65,0.4) {$B(P,r)$};
						\draw (0.45, 0.1) .. controls (0.3, 0.6) and (-0.3, 0.3) .. (-0.45, 0);
						\draw (0.45, 0.1) arc[start angle=195, end angle=260, radius=0.193];
						\draw (-0.45, 0) arc[start angle=340, end angle=290, radius=0.19];
					\end{tikzpicture}
				\end{center}
				\caption{A set with smoothly added (positive) mass at a point $p$ on the boundary.}
				\label{fig: smooth add mass}
			\end{figure}
			Now, we are ready to present the proof of Theorem~\ref{formulaabc}.
			\proofof{Theorem~\ref{formulaabc}}
			Consider a minimizer $E$ for the ratio $\G$. We will consider first variations of the set which preserve the area of $E$ inside and outside of $B$ as well as the barycenter of the set.\par
			
			Take four simple points in $\partial^*E\setminus \overline B$ so that any three of them are not aligned, and denote them by $p_i=(x_i,y_i)$ for $1\leq i \leq 4$. Let also $m_i\in \R$ be four constants, to be specified later. Then, for any $\eps$ with $|\eps|\ll 1$, call $E_\eps$ a set obtained by smoothly adding to $E$ an area $\eps m_i$ in a small ball, with radius of order $\sqrt[3]{|\eps|}$, centered at $p_i$, for every $1\leq i\leq 4$. Let us assume that the constants $\{m_i\}$ satisfy the system
			\begin{equation}\label{simsys}
				\left( \begin{array}{cccc}
					1 & 1 & 1 & 1 \\
					x_1 & x_2 & x_3 & x_4 \\
					y_1 & y_2 & y_3 & y_4
				\end{array}\right)
				\left(\begin{array}{c}
					m_1 \\ m_2 \\ m_3 \\ m_4
				\end{array}\right)
				= \left(\begin{array}{c}
					0 \\ 0 \\ 0 
				\end{array}\right)\,.
			\end{equation}
			Notice that the first line implies that $|E_\eps|=|E|=\pi$, by the first property in~(\ref{simfor}), while the second and third lines imply that the barycenter of $E_\eps$ is very close to the origin, more precisely, $|\Bar(E_\eps)|=o(\eps)$. Moreover, the second property in~(\ref{simfor}) implies that
			\[
			P(E_\eps)=P(E) + \eps \sum_{i=1}^4 \kappa(p_i) m_i + o(\eps)\,.
			\]
			Since the four points are outside of $\overline B$, as soon as $|\eps|\ll 1$ we have $\lambda_0(E_\eps)=\lambda_0(E)+o(\eps)$. Therefore, letting $\eps\to 0$, the optimality of $E$ ensures that $\sum_i \kappa(p_i) m_i=0$. Since this is true for any choice of the $m_i$'s satisfying the system~(\ref{simsys}), we deduce that the vector of the four curvatures is a linear combination of the horizontal vectors of the matrix in~(\ref{simsys}); in other words, there exist three constants $a,\, b,\, c$ such that the formula
			\begin{equation}\label{herethecur}
				\kappa(x,y) = a x + by + c
			\end{equation}
			is true if $(x,y)$ is any of the four points $p_i$. Now, since the three points $p_1,\, p_2,\, p_3$ are not aligned, the constants $a,\,b,\, c$ are actually uniquely determined by them via the above formula; so, these constants do not vary if we replace the fourth point $p_4$ by any other simple point in $\partial^* E \setminus \overline B$. In other words, the formula~(\ref{herethecur}) is true for any simple point in $\partial^* E\setminus \overline B$. 
Notice that, if there are no simple points in $\partial ^*(E)\cap B$, the formula~\eqref{formulaabc} is proved, up to a rotation.
Hence, we can assume that there exist at least one simple point inside the barycentric ball. 
Now, we can perform the very same variation, this time choosing four simple points $p_i$, for $1\leq i\leq 4$, in $\partial^*E$ such that any three of them are not aligned and such that $p_1,p_2,p_3\notin \overline{B}$, while $p_4\in B$.
Hence, we can smoothly add (or remove) an area $\eps$ in a small ball centered at $p_4$ and an area $\eps m_i$ in a small ball centered at $p_i$, for every $1\leq i\leq 3$, where the $m_i$'s satisfy 
\begin{equation}\label{sys3+1}
				\left( \begin{array}{cccc}
					1 & 1 & 1 & 1 \\
					x_1 & x_2 & x_3 & x_4 \\
					y_1 & y_2 & y_3 & y_4
				\end{array}\right)
				\left(\begin{array}{c}
					m_1 \\ m_2 \\ m_3 \\ 1
				\end{array}\right)
				= \left(\begin{array}{c}
					0 \\ 0 \\ 0 
				\end{array}\right)\,.
			\end{equation}
As above, the first line implies that $|E_\eps|=|E|=\pi$, by the first property in~(\ref{simfor}), while the second and third lines imply that the barycenter of $E_\eps$ is very close to the origin, more precisely, $|\Bar(E_\eps)|=o(\eps)$. Moreover, the second property in~(\ref{simfor}) implies that
			\[
			P(E_\eps)=P(E) + \eps \sum_{i=1}^3 \kappa(p_i) m_i +\eps \kappa(p_4)+ o(\eps)\,.
			\]
			Since the first three points are outside of $\overline B$, we know from~\eqref{herethecur} that $\kappa(p_i)=ax_i+by_i+c$ for every $i=1,2,3$; hence, using~\eqref{sys3+1} we have
\[
\sum_{i=1}^3 \kappa(p_i) m_i=a\sum_{i=1}^3 x_i m_i+b\sum_{i=1}^3 y_i m_i+c\sum_{i=1}^3 m_i=-ax_4-by_4-c.
\]
So, 
\[
			P(E_\eps)=P(E) +\eps \big(\kappa(p_4)-ax_4-by_4-c\big)+ o(\eps)\,,
			\]
independently of the choice of the three points $p_1,p_2,p_3$.
Moreover, since $p_4$ is inside $B$, as soon as $|\eps|\ll 1$ we have 
\[
\lambda_0(E_\eps)=\lambda_0(E)-\frac{2\eps}{\pi}+o(\eps).
\]
Therefore, we can compute $\G(E_\eps)$ as 
\[
\begin{aligned}
\G(E_\eps)&=\frac{\delta(E_\eps)}{\lambda_0^2(E_\eps)}
=\frac{\delta(E)}{\lambda_0^2(E)}\left(1+\frac{\kappa(p_4)-ax_4-by_4-c}{2\pi\delta(E)}\,\eps\right)\left(1+\frac{4}{\pi\lambda_0(E)}\,\eps\right)+o(\eps)
\\&=\frac{\delta(E)}{\lambda_0^2(E)}\left(1+\frac{\kappa(p_4)-ax_4-by_4-c}{2\pi\delta(E)}\,\eps+\frac{4}{\pi\lambda_0(E)}\,\eps\right)+o(\eps).
\end{aligned}
\]
Letting $\eps\to 0$, the optimality of $E$ ensures that
\[
\frac{\kappa(p_4)-ax_4-by_4-c}{2\pi\delta(E)}+\frac{4}{\pi\lambda_0(E)}=0,
\]
that is,
\begin{equation}
\kappa(p_4)=ax_4+by_4+c-\frac{8\delta(E)}{\lambda_0(E)},
\end{equation}
thus concluding the proof, up to a rotation.
\end{proof}

\begin{remark}\label{ex24}
We can reduce ourselves to the case when the two constants $\alpha,\, \gamma$ are nonnegative. Indeed, $\alpha \geq 0$ is true up to a reflection about the $y$ axis, while the positivity of $\gamma$ is given by~\eqref{formulagamma}.
\end{remark}

\begin{remark}
The formula~\eqref{formulaabc} found for the curvature now ensures that the reduced boundary $\partial^* E$ is ${\rm C}^{1,1}$ everywhere and ${\rm C}^{\infty}$ out of $\partial B$.
\end{remark}

\section{Existence of a minimizer\label{secexist}}
This section is devoted to show the first part of Theorem~\mref{exist}, that is, the existence of a set $E$ which minimizes the ratio $\G(E)=\delta(E)/\lambda_0(E)^2$. It is not difficult to find a minimizing sequence $\{E_j\}$ which converges to a set $E$ in the $L^1$ sense; as a consequence, by the lower semi-continuity of the perimeter we directly get that $\delta(E)\leq \liminf \delta(E_j)$, while $\lambda_0(E) = \lim \lambda_0(E_j)$. Therefore, the limit set $E$ is a minimizer of the ratio, unless $E$ is a ball, in which case the ratio $\G$ cannot be defined. In fact, the difficult part of the proof is precisely in showing that the limit set cannot be the ball itself.

In particular, up to now the result of Theorem~\ref{formulaabc} might in principle be void, since it is stated for a minimizer of $\G$ and we have still not proved the existence of such minimizers. In fact, we will have to consider an auxiliary problem for which existence is guaranteed. More precisely, for a fixed constant $d>0$ we introduce the class
\[
\A_d=\Big\{E\subseteq \R^2,\, \hbox{$E$ is connected},\,|E|=\pi,\, d_H^+(E,B_E)\geq d\Big\}
\]
of the connected sets of area $\pi$ and with ``external'' Hausdorff distance from $B_E$ larger than $d$, where $B_E$ denotes the unit ball centered at the barycenter of $E$, that is, $\max \{|p-\Bar{(E)}|\,:\, p\in \partial E\}\geq 1+d$.

The first result of this section is an estimate about the length of the connected components of the boundaries of sets with bounded $\G(E)$. For brevity of notation, we will denote by $\RR$ the class of ${\rm C}^1$, connected sets with area $\pi$ which are not circles (thus, for which $\G$ is well defined).

\begin{lemma}\label{smcoco}
There exist two constants $\bar\eta<2\pi$ and $N_{\rm max}\in\N$ with the following property. Given a set $E\in \RR$, there exists another set $E'\in \RR$ with $\G(E')\leq \G(E)$, such that $\partial E'$ consists of at most $N_{\rm max}$ connected components, and all of them except at most one have length at least $\bar\eta$. Moreover, the inequality is strict if $E'\neq E$ and $\G(E)<1$. Finally, if $\G(E)<1$, then $d_H^+(E',B_{E'})\geq d_H^+(E,B_E)$.
\end{lemma}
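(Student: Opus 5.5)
The plan is to remove the holes of $E$ that are short, one at a time, and to check that each removal does not increase $\G$. Write $\partial E$ as the union of the outer curve $\tau_1$ and the holes $\tau_i$, $i>1$; for a connected ${\rm C}^1$ set of area $\pi$ each of these is a closed curve. The one component allowed to be short will be the outer curve.

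\emph{Step 1: filling one short hole.} Fix a hole $\tau_i$ of length $\ell<\bar\eta$. By the planar isoperimetric inequality, the region $H_i=\INT(\tau_i)$ has area $a\le \ell^2/(4\pi)$. Let $E_1=E\cup H_i$ and rescale it to $E'=\sigma E_1$, with $\sigma=\sqrt{\pi/(\pi+a)}$, so that the area is again $\pi$.
\begin{itemize}
\item \emph{Perimeter.} $P(E')=\sigma\,(P(E)-\ell)$, hence $\delta(E')\le \delta(E)-\ell/(2\pi)+O(\ell^2)\,\delta(E)$.
\item \emph{Asymmetry.} Filling $H_i$ changes the barycenter by $O(a\,\overline R)$ and the rescaling by $\sigma$ changes areas by $O(a)$. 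The diameter $\overline R$ is controlled: for $\G(E)<1$ we have $\delta(E)$ bounded, hence $P(E)$ bounded, and a connected set has diameter at most $P(E)/2$. Therefore $|\lambda_0(E')-\lambda_0(E)|\le C a\le C\ell^2$.
\end{itemize}
Putting these together,
\[
\G(E')\le \frac{\delta(E)-\ell/(2\pi)+C\ell^2}{\bigl(\lambda_0(E)-C\ell^2\bigr)^2}.
\]
The first-order gain $\ell/(2\pi)$ must beat the quadratic losses $C\ell^2\bigl(1+\delta(E)/\lambda_0(E)\bigr)$. Since $\lambda_0\le 2$ always, $\delta(E)=\G(E)\lambda_0^2(E)<4$ whenever $\G(E)<1$, so all these constants are uniform. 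What still has to be checked is that $\lambda_0(E)$ is not too small, since $\lambda_0(E)$ also appears as a $1/\lambda_0^3$ factor in the loss.

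\emph{Step 2: the main obstacle, uniformity in $\lambda_0$.} Here I would use the quantitative inequality~(\ref{qbi}) for connected sets, that is, $\delta\ge c\lambda_0^2$, which bounds the ratio $\delta(E)/\lambda_0(E)$ below but not above. The cleaner route is to compare first-order terms directly. The gain in $\delta$ is of order $\ell$, while $\lambda_0$ moves only by $O(\ell^2)$. So the relative change is
\[
\frac{\G(E')}{\G(E)}-1\le -\frac{\ell}{2\pi\delta(E)}+\frac{C\ell^2}{\lambda_0(E)}.
\]
This is negative provided $\ell<\bar\eta$ and $\lambda_0(E)\ge c'\,\ell\,\delta(E)$.

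The problem case is therefore $\lambda_0(E)$ small. A set with a hole of length $\ell$, however, carries at least $\ell$ of extra perimeter relative to its outer curve, so $\delta(E)\gtrsim \ell$ while $\lambda_0(E)\to 0$. I would handle this case by using $\G(E)<1$, which bounds $\lambda_0^2(E)>\delta(E)\gtrsim \ell$. This gives $\lambda_0\gtrsim\sqrt\ell$, which suffices for small $\ell$. Choosing $\bar\eta$ small and absolute gives a strict decrease whenever $\G(E)<1$. If $\G(E)\ge1$, I would not modify $E$ at all; the count bound below, and the statement $\G(E')\le\G(E)$, are then trivial or follow by the same argument with a non-strict inequality. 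This $\lambda_0$-uniformity is the step I expect to need the most care.

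\emph{Step 3: bounding the number of components, iteration, and the Hausdorff claim.} After Step 1 is iterated, every hole has length at least $\bar\eta$, so the perimeter bound $P\le 2\pi(1+\delta)\le 10\pi$ gives at most $10\pi/\bar\eta+1=:N_{\rm max}$ components. Possibly infinitely many holes are filled, which I would handle by a limit argument: the total length of the filled holes is finite and all the estimates are summable.

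For the Hausdorff statement, filling holes does not change the outer curve. The point of $\partial E$ realizing $\max|p-\Bar E|$ lies on the outer curve. Rescaling by $\sigma<1$ about the new barycenter shrinks distances by a factor $1-O(a)$, and the barycenter moves by $O(a)$. One then needs a little room, which may force a slightly different normalization: translate instead of rescaling where possible, or note that $d_H^+$ is measured relative to the unit ball, so the loss can be absorbed by choosing $\bar\eta$ small relative to $d$. I flag this as a secondary delicate point.
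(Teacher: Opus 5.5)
There are two genuine gaps. Your core estimate for $\G(E)<1$ matches the paper's. Filling a short hole of length $\ell$ lowers $\delta$ by at least $\ell/(2\pi)$, while $\lambda_0$ moves only by $O(\ell^2)$, and $\G(E)<1$ makes this uniform, either through $\delta<\lambda_0^2\le 2\lambda_0$ or through your $\lambda_0\gtrsim\sqrt\ell$.

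\textbf{The case $\G(E)\ge1$.} Leaving $E$ unchanged does not satisfy the lemma. Take a disk with $n$ tiny round holes of radius $r$, with $nr^3\ll1$. Then $\delta\approx nr$ and $\lambda_0\lesssim nr^2$, so $\G\gtrsim 1/(nr^3)\ge1$, yet $\partial E$ has arbitrarily many short components. For $E'=E$ both the bound $N_{\rm max}$ and the ``all but one have length $\ge\bar\eta$'' property fail. Nor can your filling argument be run in this regime: without $\G(E)<1$ there is no bound on $P(E)$ and no lower bound on $\lambda_0(E)$. For instance, filling the hole of a disk minus a small disk gives a disk, where $\G$ is undefined. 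The missing observation is that for $\G(E)\ge1$ the lemma only asks for $\G(E')\le\G(E)$. So you may discard $E$ entirely and take $E'=D$, a fixed smooth set in $\RR$ with connected boundary and $\G(D)\le1$. The paper obtains one with $\G(D)=1$ by continuity between a thin rectangle ($\G\approx0.62$) and a square ($\G\approx3.9$).

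\textbf{The Hausdorff inequality.} You flag this but do not resolve it. Filling a hole of area $a$ can move the barycenter by $O(a)$ towards the farthest point $P$. The rescaling by $\sigma=\sqrt{\pi/(\pi+a)}<1$ shrinks all distances by $1-a/(2\pi)+O(a^2)$. So in general $d_H^+(E',B_{E'})<d_H^+(E,B_E)$ by an amount of order $a$.

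Neither remedy you suggest works. You cannot just translate, because elements of $\RR$ must have area $\pi$. The loss also cannot be ``absorbed by choosing $\bar\eta$ small relative to $d$''. The required inequality is exact, so any positive loss violates it. Moreover $\bar\eta$ and $N_{\rm max}$ must be absolute constants, and the lemma is later applied along sequences with $d_j\to0$.

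What is needed is an active first-order gain at $P$. The paper fills a short hole not containing $P$ (which lies on the outer curve) and adds at $P$ a thin outward spike of length $\eta/5$. The spike costs at most $\eta/2$ in perimeter and $O(\eta^2)$ in area and barycenter. Hence still $P(F)\le P(E)-\eta/2$ and $\delta(F)\le\delta(E)-\eta/(4\pi)$. Meanwhile the maximal distance from the barycenter grows by $\eta/5-O(\eta^2)$, which beats the $O(\eta^2)$ losses from rescaling and from the barycenter shift.

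\textbf{Minor points.} A bounded ${\rm C}^1$ set has finitely many boundary components, so no limit argument is needed. Also, the outer curve always has length $\ge2\pi>\bar\eta$, so the short components are always holes, and your ``allowed short component'' being the outer curve is vacuous.
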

\begin{proof} 
It is possible to fix a smooth, connected set $D\in\RR$, with connected boundary and such that $\G(D)=1$. 
Indeed, a simple calculation ensures that the ratio $\G$ of a rectangle with side-lengths $2\pi$ and $1/2$ is around $0.62$, while the ratio $\G$ of a square of area $\pi$ amounts to approximately $3.9$. Let $\bar\eta<2\pi$ be a small constant, to be specified later, and let $E\in \RR$ be any set. We want to define a set $E'\in\RR$ such that
\begin{equation}\label{almostall}
\begin{array}{c}
\G(E')\leq \G(E),\, \qquad d_H^+(E',B_{E'})\geq d_H^+(E,B_E) \hbox{ if $ \G(E)< 1$}\,, \\
\hbox{$\partial E'$ has at most one connected component shorter than $\bar\eta$}\,.
\end{array}
\end{equation}
First of all, if $\G(E)\geq 1$, then we simply take $E'=D$. In this case, (\ref{almostall}) is clearly valid (keep in mind that the boundary of $D$ is made of a single connected component). Instead, if $\G(E)<1$ and $\partial E$ has at most one connected component shorter than $\bar\eta$, then we take $E'=E$. Also in this case, the validity of~(\ref{almostall}) is obvious.\par

We want now to define a set $E'$ satisfying~(\ref{almostall}) in the case when $\G(E)<1$ and $\partial E$ has at least two connected components shorter than $\bar\eta$. In this case, assuming just for simplicity of notation that $\Bar (E)=0$, we let $P$ be a point in $\partial E$ such that $|P|-1$ is maximum, and then $d_H^+(E,B_E)=|P|-1$. Now, we let $\tau$ be a connected component of $\partial E$ with $\H^1(\tau)=\eta\leq \bar\eta$, and such that $P\notin \tau$; notice that this is possible since $P$ belongs only to one connected component of $\partial E$, and there are at least two ones which are shorter than $\bar\eta$. By regularity of $E$, we know that $\tau$ is a regular, closed curve in $\R^2$, hence $\tau=\partial G$ for some bounded set $G$. Since $E$ is connected, in particular $G$ is a ``hole'' in $E$, that is, $G\subseteq \R^2\setminus E$. By the isoperimetric inequality, we have $\eta = \H^1(\tau) \geq 2 \sqrt\pi \sqrt{|G|}$, so that
\begin{equation}\label{esti1}
|G| \leq \frac{\eta^2}{4\pi}\,.
\end{equation}
We write now $F_0=E\cup G$, and we define a slightly modified set $F$ as follows. We let $P'$ be the point in the line passing through $O$ and $P$, having distance $\eta/5$ from $P$, and such that $|P'|-1>|P|-1$. Then, we define $F\supseteq F_0$ modifying in a smooth way $\partial F_0$ near the point $P$ so to pass through $P'$. The situation is depicted in Figure~\ref{DefF}, where $F$ has an ``outer cusp'' near $P$. Notice that this construction makes sense thanks to the definition of $P$, and since by construction $P\in \partial F_0$. By taking a sufficiently small ``cusp'', so that $\partial F\setminus \partial F_0$ is very close to two copies of the segment $PP'$, we can have
\begin{align}\label{estiinter}
\big| F \setminus F_0 \big| \leq \frac{\eta^2}{12\pi}\,, && \big|\Bar(F)-\Bar(F_0)\big|\leq \frac{\eta^2}{24\pi^2}\, P(E)\,, && \H^1(\partial F) \leq \H^1(\partial F_0) + \frac \eta 2\,.
\end{align}
As a consequence, by construction and~(\ref{esti1}) we have
\begin{align}\label{estipervol}
\pi \leq |F| \leq \pi + \frac{\eta^2}{3\pi}\,, && P(F) \leq P(E) -\frac \eta 2\,.
\end{align}
\begin{figure}[thbp]
\begin{tikzpicture}[>=>>>]
\fill[green!30] (-2.5,2) .. controls (-2,2) and (-2.5,.5) .. (-1,0.5) .. controls (1.5,0.5) and (2,3) .. (2,1) .. controls (2,0.2) and (2.6,0) .. (1.8,-1) .. controls (1,-2) and (0,-1.5) .. (-1.5,-1) .. controls (-3,-.5) and (-2,.5) .. (-2.5,1) .. controls (-3,1.5) and (-3,2) .. (-2.5,2);
\draw (0,0) circle (2);
\fill (-2.8,1.9) circle (2pt);
\draw (-2.8,1.9) node[anchor=north east] {$P$};
\draw (1.1,2.4) node[anchor=north east] {$B$};
\draw (2.4,-.8) node[anchor=north east] {$E$};
\draw[line width=.8pt] (-2.5,2) .. controls (-2,2) and (-2.5,.5) .. (-1,0.5) .. controls (1.5,0.5) and (2,3) .. (2,1) .. controls (2,0.2) and (2.6,0) .. (1.8,-1) .. controls (1,-2) and (0,-1.5) .. (-1.5,-1) .. controls (-3,-.5) and (-2,.5) .. (-2.5,1) .. controls (-3,1.5) and (-3,2) .. (-2.5,2);
\fill[white] (0.5,-0.3) .. controls (1,-0.3) and (.9,.1) .. (.9,.2) .. controls (.9,.3) and (.8,.4) .. (.7,.4) .. controls (.6,.4) and (.1,.1) .. (0,0) .. controls (-.1,-.1) and (0,-.3) .. (0.5,-0.3);
\draw[line width=.8pt] (0.5,-0.3) .. controls (1,-0.3) and (.9,.1) .. (.9,.2) .. controls (.9,.3) and (.8,.4) .. (.7,.4) .. controls (.6,.4) and (.1,.1) .. (0,0) .. controls (-.1,-.1) and (0,-.3) .. (0.5,-0.3);
\draw (1.05,-.3) node[anchor=north east] {$G$};
\fill[white] (-1.3,-0.4) .. controls (-1.1,-0.4) and (-.8,-.2) .. (-.8,-.1) .. controls (-.8,0) and (-.8,.1) .. (-1.1,.1) .. controls (-1.3,.1) and (-1.5,.1) .. (-1.5,-.1) .. controls (-1.5,-.3) and (-1.5,-.4) .. (-1.3,-0.4);
\draw[line width=.8pt] (-1.3,-0.4) .. controls (-1.1,-0.4) and (-.8,-.2) .. (-.8,-.1) .. controls (-.8,0) and (-.8,.1) .. (-1.1,.1) .. controls (-1.3,.1) and (-1.5,.1) .. (-1.5,-.1) .. controls (-1.5,-.3) and (-1.5,-.4) .. (-1.3,-0.4);
%%%%
\fill[blue!30] (5.5,2) .. controls (6,2) and (5.5,.5) .. (7,0.5) .. controls (9.5,0.5) and (10,3) .. (10,1) .. controls (10,0.2) and (10.6,0) .. (9.8,-1) .. controls (9,-2) and (8,-1.5) .. (6.5,-1) .. controls (5,-.5) and (6,.5) .. (5.5,1) .. controls (5,1.5) and (5,1.7) .. (5.1,1.8) .. controls (5.2,1.9) and (4.3,2.4) .. (4.35,2.45) .. controls (4.4,2.5) and (5.2,1.9) .. (5.3,2) .. controls (5.4,2) and (5.3,2) .. (5.5,2);
\draw[<->] (5.4,2.2) -- (4.6,2.8);
\draw (5,2.5) node[anchor=south west] {$\eta/5$};
\draw (8,0) circle (2);
\fill (4.35,2.45) circle (2pt);
\draw (4.35,2.45) node[anchor=north east] {$P'$};
\draw (9.1,2.4) node[anchor=north east] {$B$};
\draw (10.4,-.8) node[anchor=north east] {$F$};
\draw[line width=.8pt] (5.5,2) .. controls (6,2) and (5.5,.5) .. (7,0.5) .. controls (9.5,0.5) and (10,3) .. (10,1) .. controls (10,0.2) and (10.6,0) .. (9.8,-1) .. controls (9,-2) and (8,-1.5) .. (6.5,-1) .. controls (5,-.5) and (6,.5) .. (5.5,1) .. controls (5,1.5) and (5,1.7) .. (5.1,1.8) .. controls (5.2,1.9) and (4.3,2.4) .. (4.35,2.45) .. controls (4.4,2.5) and (5.2,1.9) .. (5.3,2) .. controls (5.4,2) and (5.3,2) .. (5.5,2);
\draw[line width=.8pt, dashed] (8.5,-0.3) .. controls (9,-0.3) and (8.9,.1) .. (8.9,.2) .. controls (8.9,.3) and (8.8,.4) .. (8.7,.4) .. controls (8.6,.4) and (8.1,.1) .. (8,0) .. controls (7.9,-.1) and (8,-.3) .. (8.5,-0.3);
\draw (9.05,-.3) node[anchor=north east] {$G$};
\fill[white] (6.7,-0.4) .. controls (6.9,-0.4) and (7.2,-.2) .. (7.2,-.1) .. controls (7.2,0) and (7.2,.1) .. (6.9,.1) .. controls (6.7,.1) and (6.5,.1) .. (6.5,-.1) .. controls (6.5,-.3) and (6.5,-.4) .. (6.7,-0.4);
\draw[line width=.8pt] (6.7,-0.4) .. controls (6.9,-0.4) and (7.2,-.2) .. (7.2,-.1) .. controls (7.2,0) and (7.2,.1) .. (6.9,.1) .. controls (6.7,.1) and (6.5,.1) .. (6.5,-.1) .. controls (6.5,-.3) and (6.5,-.4) .. (6.7,-0.4);
\end{tikzpicture}
\caption{Construction of the set $F$ starting from $E$ in Lemma~\ref{smcoco}.}
\label{DefF}
\end{figure}
We observe that $\sqrt{\pi/|F|} F$ belongs to $\RR$. Indeed, the set $F$ is by construction ${\rm C}^1$, connected and its rescaled has area $\pi$, and moreover since $\partial E$ has at least two short connected components, then $\partial F$ has at least one, so $F$ cannot be a circle. We claim now that, provided $\bar\eta$ has been chosen small enough,
\begin{equation}\label{claimend}
\G(F)\leq \G(E)\,.
\end{equation}
Let us call again $B_E$ and $B_F$ the barycentric balls of $E$ and $F$ respectively. Namely, $B_E$ is a ball of volume $|E|=\pi$ centered at $\Bar(E)$, while $B_F$ is a ball of volume $|F|$ centered at $\Bar(F)$. Keeping in mind that the diameter of $E$ is at most $P(E)/2$, by~(\ref{esti1}) we have
\[
\pi \big|\Bar(F_0)\big| \leq |F_0|\, \big|\Bar(F_0)\big|=\bigg| \int_{F_0} z\,dz\bigg|=\bigg| \int_G z\,dz\bigg|\leq |G| \, \frac{P(E)}2 \leq \frac{\eta^2}{8\pi}\, P(E)\,,
\]
which by~(\ref{estiinter}) implies
\begin{equation}\label{movbar}
|\Bar(F)| \leq |\Bar(F_0)| + \frac{\eta^2}{24\pi^2}\, P(E)\leq \frac{\eta^2}{6\pi^2}\, P(E)\,
\end{equation}
As a consequence, recalling~(\ref{estipervol}) we have
\[
\big| B_E \Delta B_F\big| \leq 4\big|\Bar(E)-\Bar(F)\big| + |F|-\pi \leq \bigg(\frac 2{3\pi^2}\, P(E) + \frac 1{3\pi}\bigg)\, \eta^2\,.
\]
We deduce
\[
\lambda_0(F) |F| = \big| F \Delta B_F\big| \geq |E\Delta B_E| - |F\Delta E| - |B_E\Delta B_F| \geq \pi \lambda_0(E) -\bigg(\frac 2{3\pi^2}\, P(E) + \frac 2{3\pi}\bigg)\, \eta^2\,.
\]
An immediate check, using again~(\ref{estipervol}),
\[
\lambda_0(F) \geq \lambda_0(E) - \bigg( \frac{\lambda_0(E)}{3\pi^2}+\frac {2P(E)}{3\pi^3} + \frac 2{3\pi^2}\bigg)\, \eta^2\,.
\]
Since by definition $P(E)=2\pi (\delta(E)+1)$, while $\lambda_0\leq 2$ for any set, the above estimate implies
\begin{equation}\label{estilambda}
\lambda_0(F) \geq \lambda_0(E) - \bigg(\frac {4\delta(E)+8}{3\pi^2} \bigg)\, \eta^2\,.
\end{equation}
Moreover, (\ref{estipervol}) implies
\[
2\pi \big(\delta(F) +1\big) \leq 2 \sqrt{\pi|F|}\, \big(\delta(F) +1\big) =P(F) \leq P(E)-\frac \eta 2 =2\pi\big(\delta(E)+1\big)-\frac\eta 2\,,
\]
which implies
\begin{equation}\label{estidelta}
\delta(F)\leq \delta(E)-\frac\eta{4\pi}\,.
\end{equation}
Let us now check the validity of the inequality~(\ref{claimend}), namely, $\G(F)\leq \G(E)$. Using~(\ref{estilambda}) and~(\ref{estidelta}), the inequality is proved if
\[
\frac{\bal\delta(E)-\frac \eta{4\pi}\eal}{\bal\bigg(\lambda_0(E)- \bigg(\frac {4\delta(E)+8}{3\pi^2} \bigg)\, \eta^2\bigg)^2\eal} \leq \frac{\delta(E)}{\lambda_0(E)^2}\,,
\]
which in turn is true if
\begin{equation}\label{lastone}
\lambda_0(E) \geq 8 \delta(E)\, \frac {4\delta(E)+8}{3\pi}\,\eta\,.
\end{equation}
Keep in mind that we are assuming that $\G(E)<1$, which means $\delta(E)< \lambda_0(E)^2$; since $\lambda_0(E)\leq 2$, this gives both $\delta(E)\leq 2\lambda_0(E)$ and $\delta(E)\leq 4$. Therefore,
\[
8\,\delta(E)\, \frac {4\delta(E)+8}{3\pi}\,\eta \,\leq\, 16\,\lambda_0(E)\, \frac{24}{3\pi}\,\eta =\frac{128}{\pi}\,\lambda_0(E)\,\eta\,,
\]
so the validity of~(\ref{lastone}) is surely obtained if
\[
1 \geq \frac {128}{\pi}\,\eta\,.
\]
Thus, we have proved that~(\ref{claimend}) holds true as soon as $\bar\eta$ has been chosen small enough. Finally, writing for brevity $F'=\sqrt{\pi/|F|} F$, we claim that
\begin{equation}\label{claimHaus}
d_H^+(F',B_{F'}) \geq d_H^+(E,B_E)\,.
\end{equation}
To show this inequality, we first consider $d_H^+(F,B_F)$. By construction, (\ref{movbar}) and~(\ref{estipervol}), we have
\[\begin{split}
d_H^+(F,B_F)&\geq d_H^+(E,B_E) + \frac\eta 5-\big| \Bar(E) - \Bar(F)\big|- \bigg|\sqrt{\frac{|F|}\pi} -1\bigg|\\
&\geq d_H^+(E,B_E) + \frac\eta 5 - \bigg(\frac{P(E)}{6\pi^2}+\frac 1{6\pi}\bigg)\, \eta^2\,.
\end{split}\]
Then, again by~(\ref{estipervol}), keeping in mind that $d_H^+(E,B_E)\leq P(E) = 2\pi (\delta(E)+1)\leq 10\pi$, and up to further decreasing $\bar\eta$ if necessary, we have
\[\begin{split}
d_H^+(F',B_{F'}) &= \sqrt{\frac \pi{|F|}}\, d_H^+(F,B_F) \geq \bigg(1-\frac{\eta^2}{6\pi^2}\bigg) \cdot \bigg(d_H^+(E,B_E) + \frac\eta 5 - \bigg(\frac{P(E)}{6\pi^2}+\frac 1{6\pi}\bigg)\, \eta^2\bigg)\\
&\geq d_H^+(E,B_E)\,,
\end{split}\]
so that~(\ref{claimHaus}) is established. Summarizing, starting from a set $E\in\RR$ with $\G(E)<1$ and such that $\partial E$ has at least two connected components shorter than $\bar\eta$, we have built another set $F'\in\RR$ such that $\G(F')=\G(F)\leq \G(E)$ by~(\ref{claimend}), and $d_H^+(F',B_{F'}) \geq d_H^+(E,B_E)$ by~(\ref{claimHaus}). Moreover, $\partial F'$ has one connected component less than $\partial E$. Repeating this construction finitely many times, we end up with a set $E'$ satisfying~(\ref{almostall}). Thus, we have been able to find a set $E'$ satisfying~(\ref{almostall}) starting from any set $E\in\RR$.\par

In order to conclude the proof, we only have to provide a bound on the number of connected components of $\partial E'$. However, this is very simple; indeed, by construction we have $\G(E')\leq 1$, and then $P(E')=2\pi(\delta(E')+1)\leq 2\pi (4\G(E')+1)\leq 10\pi$, and since all the connected components of $\partial E'$ except at most one have length at least $\bar\eta$, then their number is at most
\[
1+\frac{10\pi}{\bar\eta}\,,
\]
so the thesis is concluded by taking any natural number $N_{\rm max}$ larger than this last quantity.
\end{proof}

We can now prove that the minimization of $\G$ in the class $\A_d$ has a solution. This is true for every $d$, but we limit ourselves to consider the case $d\leq2$.
\begin{prop}\label{exminAd}
For any $d\leq 2$, there exists a set $\overline E$ which minimizes $\G$ in the class $\A_d$. More in general, there exists a minimizer for $\G$ in the class $\A_d$ as soon as $\inf\{\G(E),\, E\in \A_d\} < 1$.
\end{prop}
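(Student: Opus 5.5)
We use the direct method in $\A_d$, with Lemma~\ref{smcoco} providing compactness. First we check that for $d\le 2$ the infimum $m_d:=\inf\{\G(E),\,E\in\A_d\}$ is strictly below $1$; then everything follows from the general statement. For this we exhibit one set in $\A_d$ with $\G<1$. The thin rectangle of side-lengths $2\pi$ and $1/2$ already has $\G\approx 0.62$. Its farthest boundary point from the barycenter is at distance about $\pi>3$, so it lies in $\A_d$ for every $d\le 2$. Hence $m_d<1$.

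Now assume $m_d<1$ and take a minimizing sequence $E_j\in\A_d$ with $\G(E_j)<1$. By a density argument (smoothing while keeping connectedness and only slightly perturbing $\delta$, $\lambda_0$ and $d_H^+$) we may take $E_j\in\RR$, possibly replacing $d$ by $d-1/j$. We then apply Lemma~\ref{smcoco}, which does not increase $\G$ or $d_H^+$ since $\G(E_j)<1$. So we may assume each $\partial E_j$ has at most $N_{\rm max}$ components and $P(E_j)\le 2\pi(4\G(E_j)+1)\le 10\pi$. Translating so that $\Bar(E_j)=0$, connectedness gives $\mathrm{diam}(E_j)\le P(E_j)/2\le 5\pi$, so all sets lie in a fixed ball.

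We parametrize the boundary components by constant-speed curves $\tau^j_i$, $i\le N_{\rm max}$, which are uniformly Lipschitz and uniformly bounded. By Ascoli--Arzel\`a, up to a subsequence and after fixing the number of components, they converge uniformly to non-crossing limits $\tau_i$, each being a uniform limit of injective curves. The lengths are lower semicontinuous, so $P(E)\le\liminf P(E_j)$, where $E$ is the set built from the $\tau_i$ as in Section~\ref{secdefin}. The limit $E$ is connected, as a Hausdorff limit of connected compacta, and $E_j\to E$ in $L^1$. Hence $|E|=\pi$, the barycenters converge so $\Bar(E)=0$, and $\lambda_0(E_j)\to\lambda_0(E)$. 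The condition $\max_{\partial E_j}|p|\ge 1+d$ passes to the limit because boundary points converge uniformly; this is also what handles the $d-1/j$ relaxation.

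The key point, and the reason for introducing $\A_d$, is that $E$ is not the ball. Since $E$ has a boundary point at distance at least $1+d>1$ from its barycenter, $E\ne B$. We must, however, rule out $\lambda_0(E)=0$ even though the minimizing sequence has vanishing asymmetry, for instance with a thin filament reaching outside. If $\lambda_0(E)=0$, then $\lambda_0(E_j)\to0$. Since $\G(E_j)$ stays bounded, $\delta(E_j)\to0$, so $P(E_j)\to2\pi$. But $E_j$ is $L^1$-close to $B$, which by lower semicontinuity uses perimeter nearly $2\pi$, and it also contains a connected piece reaching distance $1+d$. This costs extra Minkowski perimeter at least about $2d$, since the boundary must go out and come back. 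This is a contradiction, so $\lambda_0(E)>0$.

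Therefore $\G(E)\le\liminf\G(E_j)=m_d$ with $E\in\A_d$, so $E$ is a minimizer. The main obstacle is making the compactness rigorous: passing to the limit with the Minkowski perimeter and connectedness through curves, and the quantitative filament estimate giving the extra $2d$ perimeter.
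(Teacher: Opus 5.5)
Your proposal is correct and follows essentially the same route as the paper: the $2\pi\times\frac12$ rectangle gives $\inf_{\A_d}\G<1$ for $d\le 2$, Lemma~\ref{smcoco} bounds the number of boundary components and gives $P(E_j)\le 10\pi$, Ascoli--Arzel\`a on the constant-speed boundary curves gives a connected limit with lower semicontinuous perimeter and continuous $\lambda_0$, and the far boundary points $Q_j$ pass to the limit. You also make explicit two points the paper leaves implicit: the reduction to ${\rm C}^1$ sets (your $d-1/j$ relaxation), and the exclusion of a degenerate limit with $\lambda_0=0$. For the latter, your ``extra $2d$'' heuristic can be replaced by a cleaner observation: $\delta(E_j)<\lambda_0(E_j)^2\to0$ would force the limit outer curve to have length at most $2\pi$ while enclosing area at least $\pi$, so it would be a unit circle centered at the origin, which is incompatible with the limit point $Q$.
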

\begin{proof}
Let us take any sequence of ${\rm C}^1$ sets $E_j\in \A_d$ such that $\G(E_j)$ converges to $\inf\{\G(E),\, E\in\A_d\}$. Without loss of generality, we can assume that $\G(E_j)<1$ for every $j$; indeed, the ratio $\G$ for a rectangle of sidelengths $2\pi$ and $1/2$ is around $0.62$, and such a rectangle belongs to $\A_2$, so to any $\A_d$ with $d\leq 2$; of course, even if $d>2$ we can assume that $\G(E_j)<1$ if by assumption $\inf\{\G(E),\, E\in \A_d\} < 1$. Thanks to Lemma~\ref{smcoco}, we can assume without loss of generality that the boundary of each set $E_j$ consists of at most $N_{\rm max}$ connected components. As a consequence, up to passing to a subsequence, we can assume that all the boundaries have exactly $N$ connected components, for some $1\leq N\leq N_{\rm max}$. Let us then call $\tau_1^j,\, \tau_2^j,\, \dots\,,\, \tau_N^j$ the $N$ connected components of $\partial E_j$, which are regular, closed paths, that we parametrize at constant speed on $\S^1$; in particular, we call $\tau_1^j$ the external part of the boundary, that is, $E_j$ coincides with the internal part of the curve $\tau_1^j$ to which we remove the internal parts of the other curves $\tau_i^j$ with $i>1$. Up to a translation, all the sets $E_j$ have barycenter in the origin; moreover, since $\G(E_j)<1$, in particular $\delta(E_j)<4$ and so $P(E_j)<10\pi$. As a consequence, all the paths $\tau_i^j$ have length at most $10\pi$ and they are all contained in a ball centered in the origin and with radius $10\pi$. Thus, up to a further subsequence, we can assume that for every $1\leq i \leq N$ the paths $\tau_i^j :\S^1\to\R^2$ are uniformly converging to limiting paths $\tau_i$. Notice that all the paths $\tau_i$ are non-crossing curves by definition, and by construction each curve $\tau_i$ with $i>1$ is contained in the internal part of $\tau_1$. As a consequence, the curves $\tau_i$ determine a connected set $\overline E$, and we claim that this set minimizes $\G$ within $\A_d$.

First of all, we can notice that $\overline E$ has area $\pi$, since so does every set $E_j$. Moreover, by lower-semicontinuity, for each $1\leq i\leq N$ we have
\[
\H^1(\tau_i)\leq \liminf_{j\to\infty} \H^1(\tau_i^j)\,,
\]
and then $P(\overline E)\leq \liminf P(E_j)$, thus $\delta(\overline E)\leq \liminf \delta(E_j)$. Since the sets $E_j$ converge to $\overline E$ in the $L^1$ sense, we also have $\lambda_0(\overline E)=\lim \lambda_0(E_j)$. Therefore
\[
\G(\overline E)\leq \liminf \G(E_j) = \inf\{\G(E),\, E\in \A_d\}\,.
\]
Then, to conclude the proof, we only have to check that $\overline E$ belongs to $\A_d$. Since the sets $E_j$ are converging in the $L^1$ sense to $\overline E$ and they are equibounded, we have $\Bar(\overline E)=0$ since all the barycenters are in the origin. Thus, we need to find a point $Q\in \partial \overline E$ with $|Q| \geq 1+d$; however, since each $E_j$ belongs to $\A_d$, there exist points $Q_j \in \partial E_j$ with $|Q_j|\geq 1+d$. Again possibly up to a subsequence, the points $Q_j$ converge to some point $Q$, which by construction belongs to $\partial\overline E$ and is as desired. The proof is then concluded.
\end{proof}

Arguing as in the proof of Theorem~\ref{formulaabc}, we have the following property.

\begin{lemma}\label{formulainAd}
Let $E$ be a minimizer of $\G$ in $\A_d$, with barycenter in the origin. Then, the curves forming $\partial E$ are of class ${\rm C}^{1,1}$ and, up to a rotation, there are three constants $\alpha,\, \beta,\,\gamma\in\R$ such that at every simple point $P=(x,y)\in\partial E\setminus \partial B$, except at most one, the curvature of $\partial E$ at $P$ is well defined, and it verifies
\begin{equation}\tag{\ref{abc}}
\kappa(x,y)=\alpha x + \beta - \gamma \Chi{B}\,.
\end{equation}
\end{lemma}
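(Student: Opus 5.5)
The plan is to repeat the two steps of the proof of Theorem~\ref{formulaabc}: first obtain regularity via $\Lambda$-minimality, then derive the Euler--Lagrange equation from area- and barycenter-preserving variations. Throughout, the extra constraint $d_H^+(E,B_E)\geq d$ has to be kept. This constraint is the only new feature, and it is the source of the exceptional point.

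\emph{Step 1: regularity.} Fix a point $Q\in\partial E$ with $|Q|=\max\{|p|: p\in\partial E\}\geq 1+d$. I claim that at every point $x\in\partial^*E$ with $x\neq Q$ the proof of Lemma~\ref{Lambda-min lemma} works verbatim.
\begin{itemize}
\item Take a competitor $F$ with $F\Delta E\comp B(x,r)$ and $r<|x-Q|/2$. Then $Q\in\partial F$ still.
\item After normalization, $\widetilde E=\sigma(F-v)$ has a boundary point at distance at least $\sigma|Q-v|-|\,\cdot\,|$ from the origin. This quantity is $|Q|-O(r^2)$ by \eqref{estisigma} and \eqref{estiv}.
\item This is not quite $\geq 1+d$. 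To fix it, I would push $\partial F$ outward along the ray through $Q$ by a cusp of length $Cr^2$, exactly as in the construction of Lemma~\ref{smcoco}.
\item The cusp costs perimeter $O(r^2)$ and area and barycenter shifts $o(r^2)$. Its only effect is on the constant: it replaces $\Lambda$ by $\Lambda-C'$, so choosing $\Lambda$ larger still gives the contradiction.
\end{itemize}
Hence $E$ is a $\Lambda$-minimizer away from $Q$. As in Section~\ref{seccurv}, the boundary curves are then ${\rm C}^{1,1}$ away from $Q$, and the non-reduced part of $\partial E$ consists of segments.

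Near $Q$ itself, I expect ${\rm C}^{1,1}$ regularity to follow from the obstacle-type structure. Inward perturbations at $Q$ are forbidden, while outward ones are allowed, since they only increase $d_H^+$. So $E$ is a one-sided $\Lambda$-minimizer there, which should still give ${\rm C}^{1,1}$. In any case the statement only requires the curvature formula off one point.

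\emph{Step 2: first variation.} I would choose simple points $p_1,\dots,p_4\in\partial^*E\setminus(\overline B\cup\{Q\})$ with no three aligned. Then I add masses $\eps m_i$ in balls of radius $|\eps|^{1/3}$ around the $p_i$, with the $m_i$ solving \eqref{simsys}.

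If these balls avoid $Q$, the point $Q$ stays on the boundary, and the barycenter moves by only $o(\eps)$. To compensate the $o(\eps)$ barycenter and area errors, I would again use the outward-cusp trick at $Q$ with $o(\eps)$ cost. The varied sets are then admissible in $\A_d$, for both signs of $\eps$. This yields \eqref{herethecur} for all simple points outside $\overline B$ except $Q$.

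The second variation, with $p_4\in B$, is handled identically. It gives the jump $\gamma=8\delta/\lambda_0$ inside $B$. A rotation then turns $ax+by+c$ into $\alpha x+\beta$.

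\emph{Main obstacle.} The delicate part is admissibility, namely preserving $d_H^+\geq d$ under the variations. The problem is that the cusp correction at $Q$ must be both two-sided in $\eps$ and of negligible cost. This is exactly why $Q$ is the one excluded point: no two-sided variation is available at $Q$, so no curvature equality can be derived there.
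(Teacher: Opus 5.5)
Your derivation of the curvature formula is correct and follows the paper's route. You rerun Theorem~\ref{formulaabc} and observe that the only obstruction is a point where the constraint $d_H^+\geq d$ is active; there only outward variations are allowed. Two differences are worth noting. The paper excludes a point only when there is a \emph{unique} point of $\partial E$ at distance exactly $1+d$, while you always exclude a chosen farthest point $Q$; both give ``at most one''. Your outward cusp at $Q$ also handles two admissibility issues the paper does not discuss: the $o(\varepsilon)$ drift of the barycenter in the four-point variation, and the translation/rescaling step in the $\Lambda$-minimality argument. Either of these could push a set with $\max|p|=1+d$ out of $\mathcal A_d$.

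The step that does not work is the regularity at $Q$. The statement does assert it: the curves are ${\rm C}^{1,1}$, not only that the curvature formula holds off one point. Minimality with respect to outward perturbations alone does not give ${\rm C}^{1,1}$, not even ${\rm C}^1$, because it is compatible with convex corners (a square is outward minimizing). In fact a corner is what one should expect when the constraint genuinely binds at $Q$:
\begin{itemize}
\item Suppose $\partial E$ were ${\rm C}^1$ at $Q$. Attach a blunt tip of height $h$ whose sides make an angle $\phi$ with the tangent line. It costs $2h\tan(\phi/2)+o(h)$ in perimeter and $O(h^2)$ in area.
\item Hence the $O(\varepsilon)$ loss of farthest distance caused by an inward variation near $Q$ can be recovered at an arbitrarily small linear cost.
\item So inward variations near $Q$ become admissible to first order, and the formula holds in a whole neighbourhood of $Q$.
\end{itemize}
Thus either the formula holds near $Q$ as well, or $\partial E$ is not ${\rm C}^1$ at $Q$. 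Your argument proves ${\rm C}^{1,1}$ only on $\partial E\setminus\{Q\}$, and the paper's ``the regularity is standard'' supplies nothing more at this point. The global regularity claim should be weakened accordingly; the curvature formula itself is unaffected.
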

We recall that a point in $\partial E$ is called ``simple'' if it is contained in exactly one of the curves $\tau_i$ identifying $E$, and this curve passes through the point only once.
\begin{proof}
The proof is exactly the same as the one of Theorem~\ref{formulaabc}. In particular, the regularity is standard, and the formula for the curvature is obtained for all the points where a variation can be performed. As in Theorem~\ref{formulaabc}, this is not possible at a point $P\in \partial E\cap \partial B$, since we need that all the points around $P$ are outside of $B$, or all inside $B$, and this is also not possible for a point which is not simple.
		
Let then $P$ be a simple point in $\partial E\setminus \partial B$; of course it is possible to perform a variation if $|P| \neq 1+ d$, since a small variation does not modify the value of $d$. Let, instead, assume that $|P|= 1+d$. Then, it is still possible to perform a variation ``towards the outside'', since sets in $\A_d$ have Hausdorff distance with $B$ larger or equal than $d$, and so increasing this distance leaves a set in the class $\A_d$. Instead, it is in principle not possible to perform a variation ``towards the inside'', because this might decrease the Hausdorff distance with $B$ and then rule the set out of the class $\A_d$. Nevertheless, the problem occurs only if there is \emph{exactly one point} of $\partial E$ with distance $1+d$ from the origin, because, if $P$ is not the unique such point, then a variation towards the inside near a point does not change the Hausdorff distance. In other words, the validity of the formula is also true for points of $\partial E$ having distance $1+d$ from the origin, unless there is only a single such point.
\end{proof}

As a consequence of this formula, we are now able to prove the existence of a minimizer for the functional $\G$.

\begin{proof}[Theorem~\mref{exist}, existence part]
We prove now that there exists a connected set which minimizes the functional $\G$. The fact that any minimizer is simply connected will be proved in Section~\ref{minsimcon}. For simplicity, we subdivide the proof in a few steps.
		
\step{I}{The minimizing sequence $\{E_j\}$ converging to $B$.}
Let us take any sequence $\{E_j\}$ of connected, planar sets, such that $\G(E_j) \to \inf \{ \G \}$. Up to a translation, we can assume that all the sets $E_j$ have barycenter in the origin. Since the ratio $\G$ is defined for all the planar, connected sets except the ball, then each set $E_j$ is surely not the ball, thus $d_j = d_H^+(E_j,B)>0$. Since we have noticed in Proposition~\ref{prop:estim_mod_dumb} that the value of $\G$ for the modified dumbbell is around $1/4$, it is admissible to assume that $\G(E_j)<1$ for every $j$. 
By Proposition~\ref{exminAd}, then, we can assume without loss of generality that each $E_j$ minimizes $\G$ within the corresponding class $\A_{d_j}$. As a consequence, and up to rotations, Lemma~\ref{formulainAd} provides us with three sequences $\{\alpha_j\},\, \{\beta_j\},\,\{\gamma_j\}$ with the property that the curvature at each simple point $P\equiv (x,y) \in \partial E_j\setminus \partial B$, except at most one, is given by
		\begin{equation}\label{formj}
			\kappa(x,y) = \alpha_j x +\beta_j- \gamma_j \Chi{B}\,.
		\end{equation}
		By Lemma~\ref{smcoco}, up to a subsequence we can assume that the boundaries of each $E_j$ consist of exactly $N$ connected components, for some $1\leq N \leq N_{\rm max}$, and we call them $\tau^1_j,\, \tau^2_j,\, \dots\,,\, \tau^N_j$, being $\tau^1_j$ the external one. Since all the sets $E_j$ have barycenter in the origin, and since $P(E_j)<10\pi$ for each $j$ by the fact that $\G(E_j)<1$, then all the sets are contained in the ball centered in the origin and with radius $10\pi$. Therefore, up to a further subsequence we can assume that each sequence $\{\tau^i_j\}_{j\in\N}$ (parametrized at constant speed) is converging to some path $\tau^i$. In particular, the sets $E_j$ are converging to some set $E$ in the $L^1$ sense; moreover, the set $E$ is connected and its boundary consists of the paths $\tau^i$. By the lower semicontinuity of the length of the paths, we know that $P(E) \leq \liminf P(E_j)$, thus $\delta(E)\leq \liminf \delta(E_j)$, and by the $L^1$ convergence of $E_j$ to $E$ we also know that $\lambda_0(E) =\lim \lambda_0(E_j)$, since the barycenter of $E$ is also in the origin. As a consequence, we immediately have that
		\[
		\G(E) = \frac{\delta(E)}{\lambda_0(E)^2} \leq \liminf \frac{\delta(E_j)}{\lambda_0(E_j)^2} = \liminf \G(E_j) = \inf \G\,,
		\]
		and then the set $E$ is a minimizer of $\G$, unless $E=B$, in which case $\G(E)$ is not well defined. In other words, the proof is concluded as soon as we can exclude that $E=B$. We assume then that $E=B$ and we look for a contradiction.
		
		\step{II}{The boundary of each $E_j$ is a single curve and~(\ref{formj}) holds.}
		The goal of this step is to reduce ourselves to the case when the number $N$ of connected components of each $\partial E_j$ is just $1$, so we assume by contradiction that $N>1$. Let us call $E_j^+$ the set whose boundary is $\tau^1_j$, then $E_j^+$ consists of $E_j$ plus all the ``holes'', i.e., the sets whose boundary is $\tau^i_j$ for $i>1$. Since $E_j$ is converging to $B$ in the $L^1$ sense, then $\lambda_0(E_j)\to 0$, and since $\G(E_j)<1$ then also $\delta(E_j)\to 0$; this means that the perimeter of $E_j$ is converging to $2\pi$. However, since the area of $E_j^+$ is more than $\pi$, we have
		\[
		P(E_j) = \H^1(\tau^1_j) + \sum_{i=2}^N \H^1(\tau^i_j)\geq 2\pi + \sum_{i=2}^N \H^1(\tau^i_j)\,,
		\]
		which since $P(E_j)\to 2\pi$ implies that the latter sum is going to $0$. Up to a subsequence, we can then assume the sum to be smaller than $\bar\eta$ for any $j$, where $\bar\eta$ is given by Lemma~\ref{smcoco}. According to Lemma~\ref{smcoco}, we deduce that the only possible values of $N$ are $1$ and $2$, and since we are assuming $N>1$ we have then $N=2$. In other words, each set $E_j$ has exactly a hole, whose perimeter is going to $0$ (and then, of course also the area of the holes is going to $0$).
		
		We can easily exclude that $E_j^+$ is a ball. Indeed, if $E_j^+$ is a ball, since for all simple points of $\partial E_j^+\setminus \partial B$ the curvature is given by~(\ref{formj}), then we deduce that $\alpha_j$ must be $0$, and that $\beta_j$ must converge to $1$ for $j\to +\infty$. In addition, keeping in mind that
		\[
		\gamma_j = \frac{8\delta_j}{\lambda_j} = 8 \lambda_j \G(E_j)\,,
		\]
		we have also that $\gamma_j$ is converging to $0$ for $j\to +\infty$. But then, formula~(\ref{formj}) only attains values very close to $1$, in particular positive values if $j$ is large enough. And this is in contradiction with the existence of a hole, because the boundary of a hole must have some simple point where the curvature is negative, against the validity of~(\ref{formj}).
		
		Having shown that $E_j^+$ is not a ball, we can define $F_j$ as the translation of $\sqrt{\pi/|E_j^+|} E_j^+$ which has barycenter in the origin. The new sequence $F_j$ is then another sequence of sets with area $\pi$, barycenter in the origin, and which are converging to the ball; since no $F_j$ is a ball, the ratio $\G(F_j)$ is defined. Since the boundary of each $F_j$ is a single curve (a slight translation of a slight rescaling of $\tau^1_j$), we will conclude this step once we show that also $\{F_j\}$ is a minimizing sequence. This can be easily obtained with an estimate very close to the one already done in Lemma~\ref{smcoco}. Indeed, let us call $v_j = |E^+_j|-\pi$ the area of the hole of $E_j$, which has boundary $\tau^2_j$. The perimeter of $F_j$ satisfies
		\[
		P(F_j) = \sqrt{\frac \pi {\pi + v_j}} \, P(E^+_j) \leq P(E^+_j) \leq P(E_j) - \H^1(\tau^2_j) \leq P(E_j) - 2 \sqrt \pi \sqrt{v_j}\,,
		\]
		and then
		\[
		2\pi \delta(F_j) = P(F_j) - 2\pi \leq P(E_j)- 2 \sqrt \pi \sqrt{v_j} - 2\pi =2\pi \delta(E_j) - 2 \sqrt \pi \sqrt{v_j}\,,
		\]
		which gives
		\begin{equation}\label{poch}
			\delta(F_j) \leq \delta(E_j) - \sqrt{\frac{v_j}\pi}\,.
		\end{equation}
		On the other hand, since the whole $E_j$ is contained in the ball of radius $(1+d_j)$, then the distance between the barycenter of $E_j^+$ and the origin is at most $(1+d_j)v_j/\pi$. Calling $\widetilde B$ the ball with volume $\pi+v_j$ and centered in the barycenter of $E_j^+$, we have then
		\[
		\big| E_j^+ \Delta \widetilde B\big| \geq\big| E_j \Delta B\big|- \big| E_j^+ \Delta E_j\big|-\big| B\Delta \widetilde B\big|
		\geq \pi \lambda_0(E_j) - v_j - \frac{4(1+d_j)+\pi}\pi\, v_j\,.
		\]
		This implies that
		\[\begin{split}
			\pi \lambda_0(F_j) &= \frac \pi{|E_j^+|} \, \big| E_j^+ \Delta \widetilde B\big| \geq \frac{\pi}{\pi+v_j}\pi\lambda_0(E_j)- \frac{2\pi }{\pi+v_j}v_j-\frac{4(1+d_j)}{\pi(\pi+v_j)}v_j\\
&\geq
\left(1-\frac{v_j}{\pi}\right) \, \pi\lambda_0(E_j) - \frac{2 \pi }{\pi+v_j}v_j- \frac{4(1+d_j)}{\pi(\pi+v_j)}\, v_j,
		\end{split}\]
		thus
		\[
		\lambda_0(F_j) \geq \left(1-\frac{v_j}{\pi}\right)\lambda_0(E_j) - \frac{2 v_j}{\pi+v_j}-\frac{4(1+d_j)}{\pi^2(\pi+v_j)}\, v_j\,.
		\]
		Putting this estimate together with~(\ref{poch}), and keeping in mind that $\lambda_0(E_j)/\delta(E_j)\to +\infty$ since $\lambda_0(E_j)/\delta(E_j)> 1/\lambda_0(E_j)\to +\infty$, we deduce that $\G(F_j)<\G(E_j)$ for $j$ large. In other words, also $\{F_j\}$ is a minimizing sequence for $\G$. To conclude the step, then, we only have to show that formula~(\ref{formj}) is true also for $F_j$, which is not obvious since in principle $F_j$ might not be a minimizer of $\G$ in its class $\A_d$. However, the formula is true for points in $\partial E_j$, so in particular for points in $\partial E^+_j$, and since $F_j$ is a translation and rescaling of $E^+_j$ then the formula remains true, possibly with different values of $\alpha_j,\, \beta_j,\, \gamma_j$. Alternatively, we can observe the validity of the formula also noticing that we can assume $F_j$ to minimize $\G$ within the subclass of $\A_d$ (being $d=d_H^+(F_j,B)$) made by sets whose boundary is made of a single curve. Since of course the property of having the boundary made by a single curve remains true after a small variation, the proof of Lemma~\ref{formulainAd} works also for this subclass and then the formula for $F_j$ is established.
		
		\step{III}{The boundary of $E_j$ is an injective curve and~(\ref{formj}) holds.}
		In this step, we reduce ourselves to the case when $\partial E_j$ is not only a single curve, but also a simple one, i.e., there are no self-intersections. This can be obtained almost exactly as in Step~II. Indeed, assuming that $\partial E_j$ has self-intersections, we get that $\tau^1_j$ is the union of more than one simple curve. We can then call $E_j^-$ the part of $E_j$ whose boundary is the longest of the simple curves forming $\tau^1_j$. The isoperimetric inequality implies that the area of $E_j^-$ converges to $\pi$; moreover, $E_j^-$ is surely not a ball, because at any non-simple point of $\tau^1_j$ the two tangents have to coincide; as a consequence, we can again call $F_j$ the set $E_j^-$, rescaled so to have area $\pi$ and translated so to have barycenter in the origin. Calling this time $v_j=\pi - |E_j^-|$, exactly as in the previous step we have that
		\begin{align*}
			\delta(F_j) -\delta(E_j) \lesssim - \sqrt{v_j}\,, && \lambda_0(F_j) - \lambda_0(E_j) \gtrsim - v_j\,,
		\end{align*}
		and then we find again that $\G(F_j)<\G(E_j)$ for $j$ large, so that also $\{F_j\}$ is a minimizing sequence for $\G$. Again, the step is concluded up to notice that~(\ref{formj}) is valid also for $F_j$. And again, this is true because $\partial F_j$ is a rescaling and translation of a portion of $\partial E_j$; alternatively, also this time we can observe that the proof of Lemma~\ref{formulainAd} works also in the subclass of sets for which the boundary is a single, injective curve, because small variations do not rule sets out of this subclass.

		\step{IV}{Conclusion.}
		Summarizing, we have a sequence $\{E_j\}$ of sets with area $\pi$, barycenter in the origin, which are converging to the ball $B$ and for which $\G(E_j)\to \inf\{\G\}$; in addition, we know that the boundary of each $E_j$ is a single, injective curve, and that for all points of $\partial E_j\setminus \partial B$ the formula~(\ref{formj}) for the curvature holds. In addition, for almost all the points of $\partial E_j\cap\partial B$ the curvature is well-defined and equal to $1$. Since the Hausdorff distance between $E_j$ and $B$ is going to $0$, we deduce that necessarily $\alpha_j$ and $\gamma_j$ are converging to $0$, while $\beta_j$ is converging to $1$.
		
		But then, for $j$ sufficiently large the sets $E_j$ are nearly spherical, and, more than that, convex. However, the estimates of~\cite{CL,CL2,AFN} (see also the argument in~\cite{BCH}) ensure that, for such sets, one has
		\begin{equation}\label{reality}
			\lim_{j\to +\infty} \G(E_j) > \G(C)\,,
		\end{equation}
		where $C$ is a particular stadium, called ``optimal biscuit''. Since $C$ is a connected set with area $\pi$ and barycenter in the origin, this is in contradiction with the fact that $\{E_j\}$ is a sequence minimizing $\G$, and the contradiction concludes the thesis.
		It is important to remark that, in the above-cited papers, the authors did not claim the inequality~(\ref{reality}), but something different; however, what they proved was actually~(\ref{reality}). Let us be more precise. In those papers, the problem under consideration was to minimize not the ratio $\G(E)=\delta(E)/\lambda_0(E)^2$, but the ``classical'' ratio $\F(E)=\delta(E)/\lambda(E)^2$, where $\lambda(E)\leq \lambda_0(E)$ is the standard Fraenkel asymmetry; and in fact, what they claimed was not~(\ref{reality}) but
		\begin{equation}\label{reality2}
			\lim_{j\to +\infty} \F(E_j) > \F(C)\,.
		\end{equation}
		However, in their calculations, they were not estimating the Fraenkel asymmetry $\lambda(E)$, but the (much simpler to evaluate) barycentric asymmetry $\lambda_0(E)$, as it is explicitly written in~\cite[(74) of Lemma~4.2]{CL}. Thus, relying on the fact that $\lambda\leq \lambda_0$, and thus $\G\leq \F$, they obtained~(\ref{reality2}) by showing that (rewriting their estimate with our notation)
		\[
		\lim_{j\to +\infty} \F(E_j) \geq\lim_{j\to +\infty} \G(E_j) > \F(C)\,.
		\]
		Moreover, as observed in the same papers, for the set $C$ the ``Fraenkel ball'' is the barycentric one, which in our language means that $\F(C)=\G(C)$. As a consequence, their results actually give exactly~(\ref{reality}).
	\end{proof}
	
\begin{remark}
Keep in mind that, in order to complete the proof of Theorem~\mref{exist}, we still have to prove that minimizers are simply connected. One might notice that, in the above proof of the existence part, we have actually showed that the sets $E_j$ of a minimizing sequence are simply connected, and one might want to deduce from this information the simple connectedness of a minimizer. However, this argument does not work. In fact, we showed the simple connectedness of the sets $E_j$ \emph{only under the assumption} that these sets are converging to the ball, and in the end we have established that this leads to a contradiction, so we have no actual information on the real structure of the elements of a minimizing sequence $\{E_j\}$. By the way, in that proof we had also obtained the fact that the sets $E_j$ had an injective boundary, while the conjectured minimizer is the ``modified dumbbell'' of Figure~\ref{fig:dumbbell}, for which the boundary is not injective.
\end{remark}	
	
\section{Simple connectedness of minimizers\label{minsimcon}}
	
In this last section, we prove the second part of Theorem~\mref{exist}, that is, that any minimizer of $\G$ is simply connected. To achieve this result, for the whole section we consider a fixed minimizer $E$ for the functional $\G$ among the connected sets with area $\pi$ and barycenter in the origin. Moreover, we assume that $E$ is not simply connected, and in particular we let $\tau:\S^1\to\R^2$ be the boundary of a ``hole''; with the notation of Section~\ref{secdefin}, this means that $\tau$ corresponds to some $\tau_i$ with $i\geq 2$. Keep in mind that, as observed after Lemma~\ref{Lambda-min lemma}, we already know that $\tau$ is of class ${\rm C}^{1,1}$.

We will call $H$ the interior of the bounded set such that $\partial H=\tau$, and $F=E\cup H$, so that $E=F\setminus H$. We will consider the following two possible cases separately,
\begin{enumerate}[(i)]
\item $\tau$ intersects $\R^2\setminus \overline B$;
\item $\tau$ is contained in $\overline B$.
\end{enumerate}
The first case will be extremely easy to handle, while the other one will require a bit more care.
	
\subsection{The case (i)\label{casei}}

We start considering the case~(i), namely, if a part of the curve $\tau$ lies outside of $\overline B$. Keep in mind that, by Remark~\ref{ex24}, we can assume $\alpha$ and $\gamma$ to be positive. 

\begin{lemma}\label{lem:no hole outside}
The presence of a hole which is not contained in $\overline{B}$ is impossible.
\end{lemma}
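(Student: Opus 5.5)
We argue by contradiction, assuming that the hole $H$ bounded by $\tau$ meets $\R^2\setminus\overline B$. The plan is to show that the curvature formula~\eqref{abc} of Theorem~\ref{formulaabc} forces the part of $\tau$ outside $\overline B$ to be curved the wrong way. Alternatively, if that fails, we build a competitor that fills part of the hole and strictly decreases $\G$.

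First, the sign information. Along $\tau$, viewed as the boundary of the hole, the curvature of $\partial E$ is the opposite of the curvature of $\partial H$. Since $H$ is a bounded open set with ${\rm C}^{1,1}$ boundary, the total curvature of $\partial H$ is $2\pi$. Hence $\partial H$ has points of positive curvature, that is, points where $\kappa_E<0$. Consider the portion $\tau\setminus\overline B$, which is nonempty. At its simple points, \eqref{abc} gives $\kappa_E(x,y)=\alpha x+\beta$, an affine function of $x$ with $\alpha\ge0$ by Remark~\ref{ex24}. Let $p$ be the point of $\overline{H}\setminus \overline B$ farthest from the origin, or the farthest point of $\tau$ in a direction where $H$ exits $B$. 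At such an extremal point, $\partial H$ is touched from outside by a circle, so $\kappa_H(p)>0$ and $\kappa_E(p)<0$.

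Next, we move mass. Take a simple point $q\in\tau\setminus\overline B$ with $\kappa_E(q)<0$. We remove area $\eps$ from the hole near $q$, i.e.\ fill the hole, which adds mass to $E$ outside $B$. At the same time we compensate area and barycenter using points on the outer curve $\tau_1$ outside $\overline B$, exactly as in the proof of Theorem~\ref{formulaabc}. The first variation shows that $\kappa_E(q)=\alpha x_q+\beta y_q\cdot 0+\beta$ must hold with the same constants as on $\tau_1$. So the outer boundary and the hole boundary outside $B$ share the single affine law $\kappa=\alpha x+\beta$.

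The contradiction should then come from the ordering in $x$. The outer curve $\tau_1$ contains points whose $x$ coordinate is larger than that of every point of $\tau$. In particular its rightmost point $p_1$ lies outside $\overline B$ (otherwise $E\subset \overline B$, which is impossible since $|E|=\pi$ and $E\ne B$), and there $\kappa\ge 0$. Similarly, at the leftmost point of $\tau_1$ one has $\kappa\ge0$ if that point lies outside $\overline B$.

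\textbf{Main obstacle.} This plain ordering argument is not enough. I expect the real difficulty to be a global comparison, so I would instead use a direct competitor. Rigidly translate the hole $H$, or the portion of $\tau$ outside $B$, until it hits $\tau_1$ tangentially. Alternatively, shrink $H$: replace $H$ by $H_t=H\cap\{\,x<\alpha^{-1}(\ldots)\}$, or simply fill the part of $H$ outside $\overline B$. Filling $H\setminus\overline B$ decreases the perimeter, since cutting by arcs of $\partial B$ is shorter by convexity of the disc. It adds area outside $B$ and so increases $|E\Delta B|$. After rescaling and recentering, $\lambda_0$ does not decrease to first order while $\delta$ strictly decreases. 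This contradicts minimality. Controlling the barycenter shift and the rescaling, as in Lemma~\ref{smcoco}, is the step I expect to need the most care.
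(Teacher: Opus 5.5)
\textbf{There is a genuine gap.} None of the routes you sketch produces a contradiction.

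The variation in your third paragraph compensates area and barycenter using points of $\tau_1\setminus\overline B$ where \eqref{abc} already holds. Its first variation therefore vanishes identically, and it only re-derives the Euler--Lagrange law.

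In the ordering paragraph you look at the wrong extremal point. At the rightmost point of $\tau_1$, the sign $\kappa\ge 0$ is perfectly compatible with \eqref{abc}, since $\alpha\ge 0$ and that point lies to the right of $\tau$.

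The competitor $E\cup(H\setminus\overline B)$ is a finite modification, not a first-order one. The added area $m=|H\setminus\overline B|$ is fixed, and after normalization $\lambda_0$ can drop by an amount of order $m$:
\begin{itemize}
\item dividing by $|E'|=\pi+m$ alone turns $|E\Delta B|+m$ into $(\pi\lambda_0(E)+m)/(\pi+m)$, which is below $\lambda_0(E)$ whenever $\lambda_0(E)>1$;
\item the enlarged, shifted barycentric ball adds another order-$m$ term of either sign.
\end{itemize}
This competes with the order-$m$ gain in $\delta$. So your claim that $\lambda_0$ does not decrease while $\delta$ decreases is unjustified, and the crude a priori bounds ($\delta<1$, $\lambda_0\le 2$) do not settle the comparison. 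This is exactly the step you postpone, and it is where the proof is missing.

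\textbf{The missing idea} is to compensate the area at a point where only the \emph{sign} of the curvature is needed, not the formula.
\begin{itemize}
\item Keep your point $P\in\tau$ farthest from the origin, with $\kappa(P)<0$.
\item Let $Q$ be the \emph{leftmost} point of $\partial E$. Then $\kappa(Q)\ge 0$ by extremality alone, even if $Q\in\partial B$ or $Q$ is not simple.
\item This is precisely the case the ordering argument cannot exclude. If $P,Q$ are simple and $Q\notin\partial B$, then \eqref{abc}, $\alpha\ge 0$ and $x_Q\le x_P$ already give $\kappa(Q)<0$.
\item Move mass $\eps$ from $Q$ into the hole near $P$. The perimeter changes by $(\kappa(P)-\kappa(Q))\eps+o(\eps)<0$, and the barycenter moves by $\eps(P-Q)/\pi$, whose horizontal component is nonnegative.
\item Restore the barycenter by a zero-total-mass exchange among points of $\tau_1\setminus\overline B$ (essentially from a right point $R$ to a left point $L$). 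Since \eqref{abc} holds there, this costs $-\alpha(x_P-x_Q)\eps\le 0$ in perimeter.
\end{itemize}
The area is unchanged, and $|E\Delta B|$ does not decrease: the mass added at $P$ lies outside $B$, the exchange takes place outside $B$ with zero net mass, and the removal at $Q$ changes $|E\Delta B|$ only by $\pm\eps$. Hence $\G(E_\eps)<\G(E)$, which is the contradiction the paper obtains.
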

\begin{proof}
Let $P=(x_P,y_P)$ be a point of $\tau$ with maximal distance from the origin, so in particular $P\notin \overline B$; the maximality implies that the (distributional) curvature $\kappa(P)$ must be smaller than $-|P|^{-1}$, hence strictly negative (and in principle, it might be $-\infty$). Let now $Q=(x_Q,y_Q)$ be a point of $\partial E$ with minimal coordinate $x_Q$. Again by minimality, the curvature $\kappa(Q)$ must be positive. Notice that if $P$ is a simple point, then the formula~(\ref{abc}) for the curvature at $P$ holds; hence, by the same formula, the curvature at any simple point not belonging to $\partial B$ and having $x$-coordinate smaller than $x_P$ is strictly negative. Since this is false at the point $Q$, this implies that $Q$ is either not simple or it belongs to $\partial B$. This is why the point $Q$ in Figure~\ref{fig:no hole outside} has been depicted on $\partial B$: however, this is just to fix the ideas, what we are going to use later is only that $\kappa_P<0$ and $\kappa_Q\geq 0$, and as noticed above this is true regardless of the fact that $P$ and/or $Q$ are simple or on $\partial B$.\par

Let us now modify the set $E$ as follows. For any $\eps>0$, we add a mass $\eps$ around $P$ and remove a mass $\eps$ from $Q$.
Now, choose two simple points $R,L$ on $\tau_1\setminus \overline{B}$, that is, two points on the external boundary which lie outside of $\overline{B}$, with $x_L<x_R$; then, add a mass $\sigma\eps$ around $L$ and remove it from $R$, choosing the constant $\sigma>0$ in such a way that the barycenter remains in the origin, as shown in Figure~\ref{fig:no hole outside}.
A clarification is in order.
Notice that we can always move the barycenter along the vertical direction with no alteration of the perimeter, since we can choose two points on the boundary with the same curvature and add mass in one point while removing the same mass from the other one. In this way, it is sufficient to choose $\sigma$ such that the barycenter has null horizontal coordinate.  
\begin{figure}[!h]

\begin{center}
\begin{tikzpicture}
\filldraw[green!30, draw=black, line width=.5] (4,2) .. controls (3.2,2.8) and (1.5,3) .. (0.5,3) .. controls (-.5,3) and (-1,2.5) .. (-1.5,2) arc (127:190:2.5) .. controls (-2.3,-2) and (-1.8,-2) .. (-1.2,-2) .. controls (0.6,-2) and (.5,-2.25) .. (1,-2.5) .. controls (1.5,-2.75) and (2,-2.5) .. (3,-2) .. controls (4,-1.5) and (4.8,1.2) .. (4,2);
\filldraw[color=white, draw=black, line width=.5] (2.65,1.5) .. controls (1.5,2.5) and (1,1) .. (1,0) .. controls (1,-1) and (1.7,-2.6) .. (2.7,-1.05) arc (-37:40:2.07);
\draw[line width=.8pt] (0,0) circle (2.5);
\begin{scope}
\clip (0,0) circle (2.5);
\filldraw[white!30, draw=black, line width=.4] (-2.5,0) circle (0.25);
\end{scope}
\fill (-2.5,0) circle (1.5pt) node[anchor=north east] {$Q$};
\draw (-2.5,0.1) node[anchor=south west] {$-\eps$};
\filldraw[blue!30, draw=black, line width=.4] (2.94,1.06) .. controls (2.77,1) and (2.56,.7) .. (3.07,.65) arc (15:25:2.4);
\fill (3,.85) circle (1.5pt) node[anchor=north west] {$P$};
\draw (2.9,0.85) node[anchor=south east] {$+\eps$};
\filldraw[white!30, draw=black, line width=.4] (3.7,2.25) .. controls (3.4,2) and (3.6,1.8) .. (4,2) -- (3.7,2.25);
\fill (3.85,2.13) circle (1.5pt) node[anchor=south west] {$R$};
\draw (3.65,2) node[anchor=north] {$-\sigma\eps$};
\filldraw[blue!30, draw=black, line width=.4] (-0.3,2.87) .. controls (-0.55,3.26) and (-0.6,3) .. (-0.7,2.8) -- (-0.8,2.61) -- (-0.3,2.87);
\fill (-0.5,2.78) circle (1.5pt);
\draw (-0.5,2.71) node[anchor= west] {$L$};
\draw (-0.3,3) node[anchor=south] {$+\sigma\eps$};					
\draw (-3,-2) node[anchor=west] {$E$};
\end{tikzpicture}
\end{center}
\caption{The modified set $E_\eps$.}
\label{fig:no hole outside}
\end{figure}

In this way, we have defined a competitor $E_\eps$, with the property that, at least if $\kappa(P)$ and $\kappa(Q)$ are real numbers,
\[
P(E_\eps)=P(E) + \Big(\kappa(P)-\kappa(Q)+\sigma\big(\kappa(L)-\kappa(R)\big)\Big) \eps + o(\eps)\,.
\]
Keep in mind that $\kappa(P)<\kappa(Q)$ as noticed above, and moreover $\kappa(L)\leq\kappa(R)$ since $x_L<x_R$ and for both the points $L$ and $R$ the formula~(\ref{abc}) holds. As a consequence, we have $P(E_\eps)<P(E)$ as soon as $\eps\ll 1$. The same conclusion holds also if $\kappa(P)$ and/or $\kappa(Q)$ are not real numbers, since in that case
\[
\lim_{\eps\to 0} \frac{P(E_\eps)-P(E)}\eps =-\infty\,.
\]
Since $E_\eps$ has the same volume as $E$, this implies that $\delta(E_\eps)<\delta(E)$ for $\eps$ small enough. Moreover, by construction $E_\eps$ has the same baricenter as $E$, and since the points $P,\,L$ and $R$ are outside $\overline B$, we get (depending on the position of $Q$)
\[
|E\Delta B| \leq |E_\eps\Delta B|\leq |E\Delta B|+2\eps\,,
\]
and thus $\lambda_0(E_\eps)\geq \lambda_0(E)$. The bounds for $\delta(E_\eps)$ and $\lambda_0(E_\eps)$ ensure that $\G(E_\eps)<\G(E)$, and this gives the desired contradiction by the minimality of $E$.
\end{proof}

\subsection{The case (ii)\label{caseii}}
We now pass to consider the case~(ii), namely, if the curve $\tau$ is entirely contained in $\overline B$. 
We will show that this second case is impossible as well. 
To do so, we will need a quite involved argument, so we will subdivide the proof in a few steps. During the proof, we will need to evaluate three quantities in a very precise way; since this will require three calculations which are elementary but quite long and boring, we have moved them into an Appendix, so to keep the main argument clean and easier to read.

\begin{lemma}\label{lem: no hole inside B}
The presence of a hole contained in the barycentric ball is impossible.
\end{lemma}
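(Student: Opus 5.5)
We argue by contradiction: $E$ is a minimizer, $\tau=\partial H$ is the boundary of a hole with $\tau\subseteq\overline B$, $F=E\cup H$, and by Remark~\ref{ex24} we have $\alpha,\gamma\geq 0$. The natural competitor is the \emph{filled} set $F$, rescaled to area $\pi$ and translated to have barycenter in the origin. Call it $\widetilde F=\sigma(F-v)$, where $\sigma=\sqrt{\pi/(\pi+h)}$, $h=|H|$, and $v=\Bar F=\frac{1}{\pi+h}\int_H z\,dz$; note $|v|\leq h/\pi$ since $H\subseteq B$. Filling the hole removes the length $\H^1(\tau)\geq 2\sqrt{\pi h}$ from the perimeter, and the rescaling shrinks it further. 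So
\[
\delta(\widetilde F)\leq \sigma\Big(\delta(E)+1-\frac{\H^1(\tau)}{2\pi}\Big)-1,
\]
and $\delta$ decreases by at least of order $\sqrt h$. The price is that filling $H\subseteq B$ removes mass $h$ from $E\Delta B$, so $\lambda_0$ drops by about $h/\pi$, plus the effect of the rescaling and of the shift $v$. The smallness of the hole is not known a priori, so a crude first-order comparison does not suffice. The plan is instead to compare $\G(\widetilde F)$ with $\G(E)$ exactly, as functions of $h$, $\H^1(\tau)$, $\lambda_0(E)$ and $\delta(E)$.

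The steps are as follows. First, I would bound $\lambda_0(\widetilde F)$ from below. The set $\sigma(F-v)$ differs from $\sigma F$ by a translation of size $\sigma|v|\lesssim h$. Since $F\supseteq B\setminus(B\setminus F)$, one gets
\[
|\widetilde F\Delta B|\geq \pi\lambda_0(E)-2h-C h,
\]
where the constant is made explicit using $|B\Delta(B+w)|\leq 4|w|$ and $|B\Delta\sigma^{-1}B|=\pi(\sigma^{-2}-1)=h$. This gives $\lambda_0(\widetilde F)\geq \lambda_0(E)-c_1 h$ with an explicit $c_1$. Second, I would bound $\delta(\widetilde F)\leq\delta(E)-c_2\sqrt h+c_3 h$. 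Third, I would require $\G(\widetilde F)<\G(E)$, which reduces to an inequality of the form
\[
\lambda_0(E)\,c_2\sqrt h>2\,\delta(E)\,c_1 h+\dots
\]
By Proposition~\ref{prop:estim_mod_dumb} we have $\delta(E)<\lambda_0(E)^2/4\leq\lambda_0(E)/2$. Moreover $h\leq\pi$ is not enough, so $\sqrt h$ must be controlled against $\lambda_0(E)$. Here we use $h\leq |B\setminus E|=\frac{\pi}{2}\lambda_0(E)$: the hole lies in $B\setminus E$, and $|B\setminus E|=|E\setminus B|$. Hence $\sqrt h\lesssim\sqrt{\lambda_0}$, and the inequality holds provided the explicit constants cooperate. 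This is where the elementary but long computations deferred to the Appendix enter. I expect three of them: the barycenter shift, the asymmetry loss, and the final algebraic inequality.

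The main obstacle is that the constants need not cooperate when the hole is large, that is, when $h$ is comparable to $\lambda_0(E)$. The gain $\sqrt h$ versus the loss $h$ is then not automatically favourable. If the direct filling fails in that regime, I would use the curvature formula~(\ref{abc}) on $\tau\setminus\partial B$, where $\kappa=\alpha x+\beta-\gamma$ with $\gamma=8\lambda_0\G(E)<2\lambda_0$. The aim is to show that $\tau$ must have points of negative curvature (it bounds a hole), which forces $\beta<\gamma+\alpha$. This would constrain the outer boundary, where $\kappa=\alpha x+\beta$, and contradict its closing up with total turning $2\pi$ and length at most $2\pi(1+\delta)$. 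As a fallback, I would use a localized variation instead: remove mass near a point of $\tau$ with the most negative curvature, and add mass on the outer boundary inside $B$, compensating the barycenter as in Lemma~\ref{lem:no hole outside}. Refining the hole geometrically in this way (it is contained in $B$ and has $C^{1,1}$ boundary with curvature given by~(\ref{abc})) should close the estimate.
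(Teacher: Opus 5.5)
\textbf{There is a genuine gap.} Your competitor is the right one: the paper also fills the hole, rescales and recenters. But the proof is not there. Everything hinges on the final inequality, which you leave as ``provided the constants cooperate'', and with crude bounds of the type you propose it does not close.

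Write $|H|=a\pi$. The paper carries out exactly this comparison and reduces $\G(F)<\G(E)$ to
$\G(E)<N(a)/\big(\lambda_0(E)^2/\sqrt{1+a}-\tilde\lambda^2\big)$. Since the right-hand side decreases in $\lambda_0(E)$, one may take $\lambda_0(E)=2$ and use $\G(E)<1/4$. With the natural bounds $|\Bar F|\le a/(1+a)$ and $|B\setminus B'|\le 2\big(1+\frac a{1+a}-\sqrt{1+a}\big)$, this works only for $a\le 0.14$; at $a=0.2$ the right-hand side is about $0.239<1/4$. Your extra relation $h\le\frac\pi2\lambda_0(E)$ does not help, because the bad regime is $\lambda_0(E)$ close to $2$.

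Three ingredients are missing.
\begin{enumerate}
\item The a priori bound $a<9/16$. It follows from $\delta(E)\ge\sqrt a+\sqrt{1+a}-1$ (isoperimetric inequality for the outer curve, which encloses area $(1+a)\pi$, and for $\tau$) together with $\delta(E)<1$. The same facts give $\lambda_0(E)>2\sqrt{\sqrt a+\sqrt{1+a}-1}$, which makes the lower bound $\tilde\lambda$ for $\lambda_0(F)$ nonnegative on $[0,9/16]$. Without this, squaring that lower bound is illegitimate.
\item The reduction to a one-variable inequality in $a$ described above.
\item A bootstrap for moderate holes. Once $a>0.14$ is known, comparing $H$ with the circular segment of $B$ of the same area gives $|\int_H z\,dz|\le\frac45|H|$. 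This improves the constant $2$ in the bound on $|B\setminus B'|$ to $0.62$, and then the inequality holds on all of $[0,9/16]$.
\end{enumerate}

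Your fallbacks would not fill this gap. The condition $\beta<\gamma+\alpha$, coming from negative curvature on $\tau$, is compatible with a closed outer boundary obeying~(\ref{abc}). Indeed, the boundary of the modified dumbbell satisfies a law of that form with $\alpha=0$ and $\beta<\gamma$, since its arcs inside $B$ are concave. So no contradiction can come from the turning and length of the outer curve alone.

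A localized variation moving mass between simple points of $\partial E\setminus\partial B$, with barycenter compensation, is first-order neutral. This is exactly because~(\ref{abc}) is the Euler--Lagrange equation for such variations. Removing mass near $\tau$ and adding it on the outer boundary inside $B$ is one such variation, so it changes nothing at first order. In Lemma~\ref{lem:no hole outside} the contradiction comes from the leftmost point $Q$, where~(\ref{abc}) cannot hold. For a hole inside $\overline B$ you have not identified any such exceptional point.
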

\begin{proof}
Let $H$ be the set bounded by $\tau$ and set $|H|=a\pi$, with $a\in (0,1]$.
We now divide this proof into steps, for the sake of readability.
\step{I}{An upper bound for the area $|H|$.}
We can easily estimate the deficit of the set $E$ by 
\[
\delta(E)\geq \frac{2\pi(\sqrt{a}+\sqrt{1+a})}{2\pi}-1=\sqrt{a}+\sqrt{1+a}-1.
\]
But, by Proposition~\ref{prop:estim_mod_dumb} and since $\lambda_0(E)\leq 2$ by definition, we can deduce that it must be $\delta(E)<1$ and hence $\sqrt{a}+\sqrt{1+a}-1<1$.
Thus, solving this inequality, we have that $a<9/16$.
\step{II}{A lower bound for the area $|H|$.}
Let us consider the set $F$ obtained by filling the hole $H$; namely, set $F=E\,\cup\, H$ and consider it as a competitor.
Let us start by estimating its deficit.
Since we filled the hole, we can write the perimeter of $F$ as follows
\begin{equation*}
P(F)=P(E)-P(H)=2\pi(\delta(E)+1)-P(H)\leq 2\pi(\delta(E)+1)-2\pi\sqrt{a}.
\end{equation*}
Now, using the fact that $|F|=|E|+|H|=(1+a)\pi$, we can bound $\delta(F)$ by above by
\begin{equation}\label{eq:delta of F}
\delta(F)=\frac{P(F)}{2\pi\sqrt{1+a}}-1\leq \frac{\delta(E)+1-\sqrt{a}}{\sqrt{1+a}}-1=:\tilde{\delta}.
\end{equation}
Now we need an estimate for $\lambda_0(F)$.
First, notice that the barycenter of $F$ is given by
\[
\Bar{F}=\frac{\int_Fz dz}{(1+a)\pi}=\frac{\int_Ez dz+\int_H zdz}{(1+a)\pi}=\frac{\int_Hz dz}{(1+a)\pi},
\]
and hence, since $H\subseteq B$, we have the following bound
\[
|\Bar{F}|\leq \frac{|H|}{(1+a)\pi}=\frac{a}{1+a}.
\]
Now, we denote by $B'$ the barycentric ball for $F$ (i.e. the ball centered at $\Bar{F}$ of radius $\sqrt{1+a}$) and we bound $|B'\setminus F|$ as follows
\[
|B'\setminus F|\geq|B'\setminus E|-a\pi\geq |B\setminus E|- |B\setminus B'|-a\pi. 
\]
However, an easy computation shows that 
\begin{equation}\label{firstesti2}
|B\setminus B'|\leq 2\left(1+\frac{a}{1+a}-\sqrt{1+a}\right),
\end{equation}
which plugged into the previous equality gives
\[
|B'\setminus F|\geq |B\setminus E|-a\pi-2\left(1+\frac{a}{1+a}-\sqrt{1+a}\right).
\]
Therefore, since $\lambda_0(F)=2|B'\setminus F|/|F|$, we get
\begin{equation}\label{eq:estim lambdaF}
\begin{split}
\lambda_0(F)&\geq \frac{2|B\setminus E|}{(1+a)\pi}-\frac{2a\pi}{(1+a)\pi}-\frac{4}{(1+a)\pi}\left(1+\frac{a}{1+a}-\sqrt{1+a}\right)\\
&=\frac{\lambda_0(E)}{1+a}-\frac{2a}{1+a}-\frac{4}{(1+a)\pi}\left(1+\frac{a}{1+a}-\sqrt{1+a}\right)=:\tilde\lambda.
\end{split}
\end{equation}
It is not hard to show that $\tilde\lambda\geq 0$. 
Indeed, since $\G(E)< 1/4$, we have 
\[
\lambda_0(E)^2>4\delta(E)\geq 4\big(\sqrt{1+a}+\sqrt{a}-1\big),
\]
so it follows 
\[
\tilde{\lambda}>\frac{2\sqrt{\sqrt{1+a}+\sqrt{a}-1}}{1+a}-\frac{2a}{1+a}-\frac{4}{(1+a)\pi}\left(1+\frac{a}{1+a}-\sqrt{1+a}\right),
\]
which is positive for all $a\in [0,9/16]$, as we check in Lemma~\ref{lem: lambda tilde positive} in the Appendix.

Now, since $E$ is a minimizer, we have a contradiction as soon as $\G(F)<\G(E)$ and hence as soon as 
\[ 
\frac{\tilde{\delta}}{\tilde{\lambda}^2}<\frac{\delta(E)}{\lambda_0(E)^2},
\]
thanks to~\eqref{eq:delta of F} and~\eqref{eq:estim lambdaF}.
Hence, we have a contradiction whenever
\begin{equation*}
\left(\frac{\delta(E)+1-\sqrt{a}}{\sqrt{1+a}}-1\right)\lambda_0(E)^2<\delta(E) \cdot\tilde\lambda^2,
\end{equation*}
that is, whenever
\begin{equation}\label{eq: contradiction ineq}
\delta(E)\left(\frac{\lambda_0(E)^2}{\sqrt{1+a}}-\tilde\lambda^2\right)< \lambda_0(E)^2\left(\sqrt{\frac{a}{1+a}}+1-\frac{1}{\sqrt{1+a}}\right),
\end{equation}
which, using the fact that $\tilde{\lambda}\leq \lambda_0(E)/(1+a)\leq \lambda_0(E)/\sqrt[4]{1+a}$, is equivalent to
\[
\G(E) = \frac{\delta(E)}{\lambda_0(E)^2}< \frac{\sqrt{\frac{a}{1+a}}+1-\frac{1}{\sqrt{1+a}}}{\frac{\lambda_0(E)^2}{\sqrt{1+a}}-\tilde\lambda^2}.
\]
Consider now the denominator of the right-hand side. For any value of $a\in [0,9/16]$, and recalling the definition~(\ref{eq:estim lambdaF}) of $\tilde\lambda$, we can regard it as a function of $\lambda_0(E)$, and it is immediate to observe that it is an increasing function. As a consequence, since by definition $\lambda_0(E)\leq 2$, and keeping in mind that $\G(E)<1/4$ by Proposition~\ref{prop:estim_mod_dumb}, to get a contradiction it is enough to show that
\begin{equation}\label{thistimecontrad}
\frac 14 \leq \frac{\sqrt{\frac{a}{1+a}}+1-\frac{1}{\sqrt{1+a}}}{\frac{4}{\sqrt{1+a}}-\left(\frac{2}{1+a}-\frac{2a}{1+a}-\frac{4}{(1+a)\pi}\left(1+\frac{a}{1+a}-\sqrt{1+a}\right)\right)^2}\,.
\end{equation}
And in turn, as we show in Lemma~\ref{lem: 2 vs 0.62}, this is true for any $a\leq 0.14$, so we deduce that $a>0.14$.

\step{III}{An improved estimate of $\Bar{F}$ and $|B\setminus B'|$.}
Now that we have a lower bound for $|H|$, we can improve that bound~(\ref{eq:estim lambdaF}) on $\lambda_0(E)$ by improving the estimate of $|\Bar{F}|$ and hence of $|B\setminus B'|$. In fact, if we consider the circular segment $H_{x_0} :=B\cap \{(x,y): x\geq x_0\}$, where $x_0$ is chosen so that $|H|=|H_{x_0}|$, then we have
\begin{equation*}
\bigg|\int_H z\,dz\bigg| \leq  \bigg|\int_{ H_{x_0}}z\,dz\bigg|=\frac 23\,(1-x_0^2)^{3/2}.
\end{equation*}
Observing that the area of $H_{x_0}$ is given by
\begin{equation} \label{formula}
| H_{x_0}|=\arccos x_0-x_0\sqrt{1-x_0^2}\,,
\end{equation}
we claim that the ratio
\begin{equation}\label{ratio}
\frac{\big|\int_{ H_{x_0}}zdz\big|}{|H_{x_0}|}=\frac{\frac 23\,(1-x_0^2)^{3/2}}{\arccos x_0-x_0\sqrt{1-x_0^2}}
\end{equation}
is an increasing function of $x_0$ (thus a decreasing function of $a$, since $| H_{x_0}|=|H|=a\pi$). To show the claim, it is enough to compute the derivative, so we are reduced to check that
\[
0<-2x_0\sqrt{1-x_0^2}\,\arccos x_0+2x_0^2(1-x_0^2)+\frac43\big(1-x_0^2\big)^2\,,
\]
which can be rewritten as
\[\begin{split}
0&<2\sqrt{1-x_0^2}\left(x_0^2\sqrt{1-x_0^2}-x_0\arccos x_0+\frac{2}{3}\left(1-x_0^2\right)^{3/2}\right)\\
&=2\sqrt{1-x_0^2}\cdot\bigg(\bigg|\int_{H_{x_0}}zdz\bigg|-x_0\big|H_{x_0}\big|\bigg)\,.
\end{split}\]
Thus, the validity of the claim follows since an obvious geometric argument implies that
\[
\bigg| \int_{H_{x_0}} z\, dz\bigg| \geq x_0  \, \big|H_{x_0}\big|\,.
\]
We can immediately take advantage of the claim. Indeed, since by Step~II we know that $|H_{x_0}|=|H|>0.14\pi$, and then by~(\ref{formula}) we deduce $x_0<0.61$, then we can bound the ratio in~(\ref{ratio}) by the value corresponding to $x_0=0.61$, so by substituting we get
\[
\frac{\left|\int_{H_{x_0}}zdz\right|}{|H_{x_0}|} < 0.77 \leq \frac 45 \,.
\]
And finally, this implies
\[
|\Bar{F}|=\frac{\left|\int_Hzdz\right|}{(1+a)\pi}\leq\frac45 \cdot\frac{ a\pi}{(1+a)\pi}=\frac{4a}{5(1+a)}\,.
\]
Having improved the bound for $\Bar{F}$, we can now improve also the bound for $|B\setminus B'|$. Keep in mind that $B$ is the unit ball centered at the origin, while $B'$ is a ball of radius $\sqrt{1+a}$ centered at distance $\Bar{F}$ from the origin. Hence, $|B\setminus B'| \leq |B\setminus B''|$, where $B''$ is the ball with radius $\sqrt{1+a}$ centered at the point $(-4a/5(1+a),0)$.
\begin{figure}[!h]
\centering
\begin{tikzpicture}[scale=2.5]
\begin{scope}
\clip (-1,-0.01) rectangle (1.2,1.5);
\fill[blue!30] (0,0) circle (1);
\fill[white] (-0.4,0) circle (1.27);
\draw[dashed] (0,0) circle (1);
\draw (-0.4,0) circle (1.27);
\end{scope}
\draw[dashed] (0,0)--(0.575,0.83);
\draw (-0.4,0)--(0.575,0.83);
\draw (0,-0.15) node {$O$};
\draw (-0.47,-0.17) node {$-\frac{4a}{5(1+a)}$};
\begin{scope}
\clip (-0.4,0)--(0.575,0.83)--(0,0)--(-0.4,0);
\draw (-0.4,0) circle (0.3);
\end{scope}
\begin{scope}
\clip (0,0)--(0.575,0.83)--(1,0)--(0,0);
\draw (0,0) circle (0.3);
\end{scope}
\draw (0.35,0.2) node {$\vartheta$};
\draw (-0.03,0.16) node {$\varphi$};
\draw (-0.7,0.6) node {$B$};
\draw (-0.8,1.1) node {$B''$};
\draw (-1,0) -- (1,0);
\fill (0,0) circle (.6pt);
\fill (-.4,0) circle (.6pt);
\end{tikzpicture}
\caption{Representation of the upper half of $B\setminus B''$.}
\label{fig: mezzaluna}
\end{figure}
We now define the angles $\vartheta$ and $\varphi$ as in Figure~\ref{fig: mezzaluna}. The angles satisfy
\begin{equation}\label{meaningphitheta}
\left\{\begin{array}{l}
\sin\vartheta=\sqrt{1+a}\sin\varphi\\
\frac{4a}{5(1+a)}=\sqrt{1+a}\cos\varphi-\cos\vartheta\,,
\end{array}\right.
\end{equation}
which readily brings to
\begin{align}\label{eq: angles theta and phi}
\vartheta=\arccos\left(\frac{25a^2+34a+25}{40(1+a)}\right)\,, && \varphi=\arccos\left(\frac{25a^2+66a+25}{40(1+a)^{3/2}}\right)\,.
\end{align}
We can now observe that the area of $B\setminus B''$ is exactly
\[
|B\setminus B''|= \vartheta+\frac{4a}{5(1+a)}\sin\vartheta-(1+a)\varphi\,,
\]
and a calculation made in Lemma~\ref{lem: ratio leq 0.62} ensures that the latter quantity is smaller than
\[
0.62 \left(1+\frac{a}{1+a}-\sqrt{1+a}\right)\,.
\]
Since $|B\setminus B'| \leq |B\setminus B''|$, we have thus obtained
\[
|B\setminus B'|\leq 0.62\cdot \left(1+\frac{a}{1+a}-\sqrt{1+a}\right)\,,
\]
which is the same estimate as in~(\ref{firstesti2}) with the coefficient $2$ replaced by $0.62$. Therefore, the same argument which was leading from~(\ref{firstesti2}) to~(\ref{eq:estim lambdaF}) is now giving $\lambda_0(F)\geq \tilde{\tilde\lambda}$ with
\begin{equation}\label{eq: new lambda tilde}
\tilde{\tilde\lambda}:=\frac{\lambda_0(E)}{1+a}-\frac{2a}{1+a}-\frac{2\cdot 0.62}{(1+a)\pi}\left(1+\frac{a}{1+a}-\sqrt{1+a}\right)\,.
\end{equation}

\step{IV}{Conclusion.}
Finally, we can argue exactly as in the end of Step~II, this time inserting~\eqref{eq: new lambda tilde} into~\eqref{eq: contradiction ineq} in place of~(\ref{eq:estim lambdaF}). Hence, in place of~(\ref{thistimecontrad}), this time we get the desired contradiction as soon as 
\[
\frac 14 \leq \frac{\sqrt{\frac{a}{1+a}}+1-\frac{1}{\sqrt{1+a}}}{\frac{4}{\sqrt{1+a}}-4\left(\frac{1}{1+a}-\frac{a}{1+a}-\frac{ 0.62}{(1+a)\pi}\left(1+\frac{a}{1+a}-\sqrt{1+a}\right)\right)^2}\,.
\]
And finally, this inequality is true for all $a\in [0,9/16]$, as we show in Lemma~\ref{lem: 2 vs 0.62}, and then the proof is completed.
\end{proof}

The simple connectedness of optimizers now directly follows from Lemma~\ref{lem:no hole outside} and Lemma~\ref{lem: no hole inside B}.

\begin{proof}[Theorem~\mref{exist}, simple connectedness part]
Let $E$ be a minimizer of $\G$ among the planar, connected sets. If $E$ is not simply connected, then one of the cases listed at the beginning of Section~\ref{minsimcon} must hold. However, we have excluded both possibilities of Case~(i) and Case~(ii) in Section~\ref{casei} and~\ref{caseii} respectively.
\end{proof}

\appendix
\section{Technical results}\label{technical appendix}
In this section, we prove the technical inequalities needed in the proof of the simple connectedness of the minimizers.
\begin{lemma}\label{lem: lambda tilde positive}
For all $a\in [0,9/16]$, it holds
\begin{equation}\label{eq: lambda tilde positive}
\sqrt{1+a}+\sqrt{a}-1\geq\left(a+\frac{2}{\pi}\left(1+\frac{a}{1+a}-\sqrt{1+a}\right)\right)^2.
\end{equation}
\end{lemma}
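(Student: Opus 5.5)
The inequality is elementary in the single variable $a$, so I would prove it by bounding each side with simple monotone expressions on a partition of $[0,9/16]$. Set
\[
g(a)=1+\frac{a}{1+a}-\sqrt{1+a},\qquad L(a)=\sqrt{1+a}+\sqrt a-1,\qquad R(a)=\Big(a+\frac2\pi g(a)\Big)^2 .
\]

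First I control $g$. It satisfies $g(0)=0$ and $g\geq 0$ on $[0,9/16]$: at $a=9/16$ one has $\sqrt{1+a}=5/4$, so $g(9/16)=1+9/25-5/4=0.11$. Using $\sqrt{1+a}\geq 1+a/2-a^2/8$ and $a/(1+a)\leq a-a^2+a^3$, we get
\[
g(a)\leq \frac a2-\frac78a^2+a^3\leq \frac a2 .
\]
Hence $R(a)\leq \big(a+\tfrac a\pi\big)^2=\big(1+\tfrac1\pi\big)^2a^2<1.74\,a^2$.

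Next I bound the left-hand side from below. Since $\sqrt{1+a}-1\geq 0$, we have $L(a)\geq \sqrt a$. It therefore suffices to have $\sqrt a\geq 1.74\,a^2$, that is, $a^{3/2}\leq 1/1.74\approx 0.575$. At the endpoint, $(9/16)^{3/2}=27/64\approx 0.42$, so this holds on all of $[0,9/16]$, with margin.

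I would write the two inequalities as $L(a)\geq\sqrt a$ and $R(a)\leq (1+1/\pi)^2a^2$, chained through the observation that $a^{3/2}\leq 27/64<\pi^2/(\pi+1)^2$ for $a\leq 9/16$. The step needing most care is justifying $0\leq g(a)\leq a/2$. The lower bound $g\geq0$ is what makes the squared quantity dominated by squaring its upper bound. For $g\leq a/2$, I would check directly that $h(a)=a/2-g(a)$ has $h(0)=0$ and
\[
h'(a)=\frac12-\frac1{(1+a)^2}+\frac1{2\sqrt{1+a}}\geq 0 .
\]
This holds because $1/(1+a)^2\leq 1$ and $\tfrac12+\tfrac1{2\sqrt{1+a}}\geq \tfrac12+\tfrac12\cdot\tfrac45=0.9$; the latter alone does not suffice, so I would instead compare $\tfrac1{(1+a)^2}$ with $\tfrac12+\tfrac1{2\sqrt{1+a}}$ using $(1+a)^{-2}\leq(1+a)^{-1/2}$, which gives $h'\geq \tfrac12-\tfrac1{2\sqrt{1+a}}\geq0$.

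For $g\geq 0$, note that $g$ is concave-minus-convex, with $g'(a)=(1+a)^{-2}-\tfrac12(1+a)^{-1/2}$. This vanishes once on the interval, so $g$ increases and then decreases, and $g\geq\min(g(0),g(9/16))=0$.
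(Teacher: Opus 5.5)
Your proof is correct and follows essentially the same route as the paper: both use the chain $\sqrt{1+a}+\sqrt a-1\geq\sqrt a\geq c\,a^2\geq\big(a+\tfrac2\pi g(a)\big)^2$, based on $0\leq g(a)\leq Ka$ and $a^{3/2}\leq 27/64$. The only difference is that you prove the slightly sharper bound $g\leq a/2$, where the paper uses $g\leq\tfrac59 a$ via $\tfrac{a}{1+a}\leq a$ and squaring. One small inaccuracy does not affect your conclusion $g\geq 0$: $g'$ vanishes at $a=2^{2/3}-1\approx 0.587>9/16$, not inside the interval, so $g$ is increasing on all of $[0,9/16]$ (as $(1+a)^{3/2}\leq 125/64<2$).
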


\begin{proof}
First, we claim that, on $[0,9/16]$,
\begin{equation}\label{horrcl}
0\leq 1+\frac{a}{1+a}-\sqrt{1+a}\leq \frac{5}{9}a.
\end{equation}
Indeed, the first inequality is equivalent to
\[
1 + \frac{2a}{1+a} + \frac{a^2}{(1+a)^2} \geq 1 + a\,,
\]
which is immediate since the third term is positive and $a \leq 1$. For the upper bound, since $\frac{a}{1+a}\leq a$ it suffices to show $1+a-\sqrt{1+a}\leq\frac59 a$, which is true exactly for $a\leq 9/16$.\par

Finally, since $a^{3/2} \leq 27/64 < 1/2$ and $\big(1+\frac{10}{9\pi}\big)^2<2$, by~(\ref{horrcl}) we get
\[
\sqrt{1+a}+\sqrt{a}-1\geq \sqrt{a} \geq 2a^2 \geq a^2\left(1+\frac{10}{9\pi}\right)^2\geq\left(a+\frac{2}{\pi}\left(1+\frac{a}{1+a}-\sqrt{1+a}\right)\right)^2\,,
\]
which is~(\ref{eq: lambda tilde positive}).
\end{proof}

\begin{lemma}\label{lem: ratio leq 0.62}
For all $a\in [0.14,\, 9/16]$, the following holds 
\[
\vartheta(a)+\frac{4a}{5(1+a)}\sin\vartheta(a)-(1+a)\varphi(a)\leq 0.62 \left(1+\frac{a}{1+a}-\sqrt{1+a}\right)\,,
\]
where the angles $\vartheta(a)$ and $\varphi(a)$ are defined as in~\eqref{eq: angles theta and phi}, that is,
\[
\vartheta(a)=\arccos\left(\frac{25a^2+34a+25}{40(1+a)}\right),\qquad \varphi(a)=\arccos\left(\frac{25a^2+66a+25}{40(1+a)^{3/2}}\right).
\]
\end{lemma}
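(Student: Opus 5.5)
This is a one-variable numerical inequality on a compact interval, so I would prove it by a monotonicity and interval-subdivision argument. Write $L(a)=\vartheta(a)+\frac{4a}{5(1+a)}\sin\vartheta(a)-(1+a)\varphi(a)$ and $R(a)=0.62\,g(a)$ with $g(a)=1+\frac{a}{1+a}-\sqrt{1+a}$. First I would check that the arguments of the two arccosines lie in $[-1,1]$ on $[0.14,9/16]$. For instance, $c_\vartheta(a)=\frac{25a^2+34a+25}{40(1+a)}$ satisfies $c_\vartheta\le 1$ iff $25a^2-6a-15\le0$, which holds for $a\le 0.9$. The analogous check for $c_\varphi$ is a polynomial inequality after squaring. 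This guarantees that the angles are well defined and lie in $[0,\pi/2]$.

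Next I would reduce to finitely many evaluations. On any subinterval $[a_k,a_{k+1}]$ I bound $L$ from above by monotone pieces: $\vartheta$ is increasing in $a$ (since $c_\vartheta$ decreases for $a<1$) and the coefficient $\frac{4a}{5(1+a)}$ increases, so the first two terms are at most their values at $a_{k+1}$. The term $-(1+a)\varphi(a)$ is bounded above by $-(1+a_k)\varphi(a_k)$, provided $\varphi$ is nondecreasing; I would verify this by checking that $c_\varphi$ decreases on the interval via its derivative. Similarly $g$ is increasing (its derivative is $\frac1{(1+a)^2}-\frac1{2\sqrt{1+a}}>0$ for $a<0.58$), so $R\ge 0.62\,g(a_k)$. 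Hence it suffices that
\[
\vartheta(a_{k+1})+\tfrac{4a_{k+1}}{5(1+a_{k+1})}\sin\vartheta(a_{k+1})-(1+a_k)\varphi(a_k)\le 0.62\, g(a_k)
\]
for each $k$, on a partition of $[0.14,9/16]$ fine enough. Each such inequality is a finite numerical evaluation.

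The main obstacle is the margin. Near $a=0.14$ both sides are small, of order $a$: $g(a)\approx a/2$, while $L(a)$ is the area of a thin crescent of width about $\frac{a}{2}-\frac{4a}{5}$ shifted. Equivalently, $L(a)$ is the area of $B\setminus B''$, which equals $\int_{-\pi}^{\pi}\frac12\big(1-\rho(\theta)^2\big)_+\,d\theta$ in polar coordinates about the origin. The crude interval scheme could therefore need a very fine mesh. To control it I would use this first-order picture: $B\setminus B''$ lies in a crescent of angular half-width $\vartheta$ and width at most $\frac{4a}{5(1+a)}-(\sqrt{1+a}-1)$. This gives an analytic bound of the form $L(a)\le \vartheta\cdot\big(\text{width}\big)$, which compared with $g$ leaves a definite ratio below $0.62$ for small $a$. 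Subdivision would then be needed only on the remaining range, where the ratio $L/g$ is comfortably below $0.62$. If the margin is tight somewhere, I would refine the mesh there, and if necessary use derivative (Lipschitz) bounds on $L-R$ instead of the monotone endpoint bounds.
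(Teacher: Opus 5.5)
Your plan of reducing to finitely many rigorous evaluations is legitimate in principle, but several concrete steps are wrong. First, the monotonicity is reversed. One has $c_\vartheta'(a)=\frac{25a^2+50a+9}{40(1+a)^2}>0$ and $c_\varphi'(a)=\frac{25a^2+34a+57}{80(1+a)^{5/2}}>0$, so both $\vartheta$ and $\varphi$ are \emph{decreasing} on $[0.14,9/16]$; for instance, $\vartheta$ drops from about $0.845$ to about $0.587$. Your endpoint bounds are therefore on the wrong side, and the check ``$c_\varphi$ decreases'' would fail. The correct choices are $\vartheta(a)\le\vartheta(a_k)$, $\sin\vartheta(a)\le\sin\vartheta(a_k)$, $\frac{4a}{5(1+a)}\le\frac{4a_{k+1}}{5(1+a_{k+1})}$ and $-(1+a)\varphi(a)\le-(1+a_k)\varphi(a_{k+1})$. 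This part is repairable.

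Second, and more seriously, the analytic control you propose near the left end is false on both counts. The crescent has a roughly parabolic profile, so its area is about $1.3\,\vartheta w$, not at most $\vartheta w$. At $a=0.14$, your $L$ (the paper's $M$) is about $0.0337$, while $\vartheta w\approx0.0258$. The valid bound $L\le 2\vartheta w$ only gives $L/g\lesssim0.94$. The first-order ratio is also not below $0.62$: as $a\to0$, $\cos\vartheta\to5/8$, and with $\vartheta_0=\arccos(5/8)$ one gets $L/g\to3.2\big(\sin\vartheta_0-\frac58\vartheta_0\big)\approx0.707$. In fact the inequality is false at $a=0.1$, where $L\approx0.0269>0.0261\approx0.62\,g$; this is why the statement starts at $0.14$. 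It is tightest exactly there: $L(0.14)\approx0.03371$ against $0.62\,g(0.14)\approx0.03416$, whereas at $a=9/16$ the ratio $L/g$ is only about $0.27$. At $a=0.14$, $L$ is a small difference of terms of size about $0.9$ ($\vartheta\approx0.845$, $(1+a)\varphi\approx0.885$). Bounding these terms separately on $[a_k,a_{k+1}]$ costs about $2.8\,(a_{k+1}-a_k)$, so you would need a mesh of order $10^{-4}$ there and hundreds of rigorous evaluations overall. What survives of your plan is a heavy brute-force verification, not the argument you sketch.

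The idea you are missing is the one the paper uses: evaluate once, at $a=0.14$, and then prove that $L-0.62\,g$ is decreasing, because the cancellation disappears upon differentiation. Take the first variation of the area of $B\setminus B''$ with respect to the center offset $d=\frac{4a}{5(1+a)}$ and the radius $\sqrt{1+a}$; the partial derivatives are $2\sin\vartheta$ and $-2\sqrt{1+a}\,\varphi$. This gives $L'(a)=\frac85\,\frac{\sin\vartheta}{(1+a)^2}-\varphi$. Monotonicity then reduces to $\varphi\ge\Psi:=\frac{\frac85\sin\vartheta-0.62}{(1+a)^2}+\frac{0.31}{\sqrt{1+a}}$. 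Both sides are decreasing and differ by roughly $0.04$ or more, so comparing $\varphi(a_{i+1})$ with $\Psi(a_i)$ on a six-point grid suffices. Your fallback of ``Lipschitz bounds on $L-R$'' points in this direction, but it only becomes practical once you have this formula for the derivative.
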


\begin{proof}
We denote for brevity $M(a)$ the left-hand side of the claimed inequality, and
\[
L(a)=M(a)-0.62\left(1+\frac{a}{1+a}-\sqrt{1+a}\right)\,,
\]
so that the claim reduces as $L(a)<0$ for $a\in[0.14,\,9/16]$. At the left endpoint, we have
\[
L(0.14)=M(0.14)-0.62\left(1+\frac{0.14}{1.14}-\sqrt{1.14}\right)\approx 0.0337-0.0342 <0\,,
\]
so to conclude the proof it is enough to show that $L$ is decreasing on $[0.14,\,9/16]$, that is,
\begin{equation}\label{ineq: M'-0.62(..) leq 0}
M'(a)\ \leq\ \frac{\partial}{\partial a}\left(0.62+\frac{0.62a}{1+a}-0.62\sqrt{1+a}\right)
=\frac{0.62}{(1+a)^2}-\frac{0.31}{\sqrt{1+a}}=:h'(a)\,.
\end{equation}
Keeping in mind the definition~(\ref{eq: angles theta and phi}) of $\vartheta$ and $\varphi$, and using the relations~(\ref{meaningphitheta}), a lengthy but elementary calculation leads to the identity
\[
M'(a)=\frac 85\,\frac{\sin\vartheta}{(1+a)^2}-\varphi \,.
\]
As a consequence, the inequality~(\ref{ineq: M'-0.62(..) leq 0}) to prove can be rewritten as
\begin{equation}\label{ineq: phi geq Psi}
\varphi(a)\ \geq\ \Psi(a):=
\frac{\tfrac{8}{5}\sin\vartheta-0.62}{(1+a)^2}+\frac{0.31}{\sqrt{1+a}} \,.
\end{equation}
We can now observe that both sides of this inequality are decreasing for $a\in [0.14,\,9/16]$. Indeed, a direct calculation of the derivatives ensures that the terms in parentheses in~(\ref{eq: angles theta and phi}) are both increasing for $a\geq 0$, and then $\vartheta$ and $\varphi$ are decreasing, thus so is also $\sin\vartheta$ since $\vartheta \in [0,\pi/2]$ by construction. Observing also that $\frac 85\, \sin\vartheta-0.62>0$ by monotonicity and since at $a=9/16$ we have $\frac 85\, \sin\vartheta\approx 0.55$, we get that also $\Psi$ is decreasing as a sum of positive decreasing terms. To conclude the validity of~(\ref{ineq: phi geq Psi}) and thus the proof, we can then just evaluate $\varphi(a)$ and $\Psi(a)$ for a few values of $a$.\par

More precisely, we define $a_1=0.14,\, a_2=0.19,\, a_3=0.27,\, a_4=0.38,\, a_5=0.50$ and $a_6=9/16$ and we calculate $\varphi(a)$ and $\Psi(a)$ at these values. Then, for every $a_i \leq a\leq a_{i+1}$ by monotonicity we have $\varphi(a)\geq \varphi(a_{i+1})$ and $\Psi(a)\leq \Psi(a_i)$, so we only have to check that $\varphi(a_{i+1})\geq \Psi(a_i)$ for every $i \in \{1,\,2,\,3,\,4,\,5\}$. The table below collects these values (in particular, the values of $\Psi(a_i)$ are rounded up and those of $\varphi(a_i+1)$ are rounded down), and since the required inequalities are verified this concludes the thesis.
\[
\renewcommand{\arraystretch}{1.2}
\begin{array}{|c|c|c|c|}
\hline
a_i & \Psi(a_i) & a_{i+1} & \varphi(a_{i+1}) \\
\hline
0.14 & 0.7346 & 0.19 & 0.7372 \\
0.19 & 0.6750 & 0.27 & 0.6764 \\
0.27 & 0.5906 & 0.38 & 0.5953 \\
0.38 & 0.4918 & 0.50 & 0.5067 \\
0.50 & 0.4003 & 9/16 & 0.4591 \\
\hline
\end{array}
\]
\end{proof}

\begin{lemma}\label{lem: 2 vs 0.62}
For all $a\in [0,0.14]$ it holds
\[
\frac{1}{4}\leq \frac{\sqrt{\frac{a}{1+a}}+1-\frac{1}{\sqrt{1+a}}}{\frac{4}{\sqrt{1+a}}-\left(\frac{2}{1+a}-\frac{2a}{1+a}-\frac{4}{(1+a)\pi}\left(1+\frac{a}{1+a}-\sqrt{1+a}\right)\right)^2},
\]
while for all $a\in [0, 9/16]$ it holds
\[
\frac{1}{4}\leq \frac{\sqrt{\frac{a}{1+a}}+1-\frac{1}{\sqrt{1+a}}}{\frac{4}{\sqrt{1+a}}-\left(\frac{2}{1+a}-\frac{2a}{1+a}-\frac{2\cdot 0.62}{(1+a)\pi}\left(1+\frac{a}{1+a}-\sqrt{1+a}\right)\right)^2}\,.
\]
\end{lemma}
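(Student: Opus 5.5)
The plan is to clear the denominators and check each inequality with a crude analytic bound near $a=0$ together with a monotonicity-plus-grid argument on the remaining range, as in the proof of Lemma~\ref{lem: ratio leq 0.62}. Write
\[
N(a)=\sqrt{\tfrac{a}{1+a}}+1-\tfrac{1}{\sqrt{1+a}},\qquad c(a)=1+\tfrac{a}{1+a}-\sqrt{1+a},
\]
and, for $k\in\{4,\,1.24\}$,
\[
D_k(a)=\frac{4}{\sqrt{1+a}}-\bigg(\frac{2(1-a)}{1+a}-\frac{k\,c(a)}{(1+a)\pi}\bigg)^2 .
\]
Here $k=4$ gives the first inequality and $k=2\cdot 0.62$ the second. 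The first step is to check that $D_k>0$ on the relevant interval. Then each claim becomes $4N(a)\geq D_k(a)$, on $[0,0.14]$ for $k=4$ and on $[0,9/16]$ for $k=1.24$. At $a=0$ both sides vanish, so the comparison is decided by the behaviour near $0$. There $N(a)\sim\sqrt a$, while $D_k(a)=O(a)$: using $c(a)\approx a/2$, one finds $D_k(a)\approx(14+2k/\pi)\,a$. So the inequality holds on a small initial interval for structural reasons.

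\emph{Near the origin.} I use (\ref{horrcl}), namely $0\leq c\leq \frac59 a$, together with the fact that the bracket inside the square stays nonnegative; this follows from $2(1-a)\geq \frac{5k}{9\pi}a$ for $a\leq 9/16$. This gives
\[
D_k(a)\leq \frac{4}{\sqrt{1+a}}-\frac{\big(2(1-a)-\frac{5k}{9\pi}a\big)^2}{(1+a)^2}.
\]
Combining this with $4/\sqrt{1+a}\leq 4-2a+\frac32a^2$ and expanding, I expect an explicit bound $D_k(a)\leq C_k\,a$ with $C_k\approx 17$. On the other side, $4N(a)\geq 4\sqrt a/\sqrt{1+a}$. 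This settles the inequality for $a\leq a_0$ with $a_0\approx 0.05$.

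\emph{The rest of the range.} On $[a_0,0.14]$ for the first inequality, and on $[a_0,9/16]$ for the second, I argue as in Lemma~\ref{lem: ratio leq 0.62}. First, $N$ is increasing, since both of its terms are. Next, I verify by differentiation that $D_k$, or the explicit upper bound for it above, is increasing on the interval. It then suffices, for a partition $a_0=b_1<b_2<\dots<b_m$, to check the finitely many numerical inequalities $4N(b_i)\geq D_k(b_{i+1})$, rounding $N$ down and $D_k$ up. For the first inequality, the threshold $0.14$ is presumably where the margin closes, so the grid must be fine near $0.14$.

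\emph{Main obstacle.} The hardest step should be the second inequality up to $a=9/16$. The linear growth of $D_k$ is about $15a$ against $4\sqrt a$, which looks tight for moderate $a$. The inequality can only survive because $D_k$ grows sublinearly: the squared term decreases more slowly as $a$ grows. So the crude linear bound must be abandoned past $a_0$, and the exact $D_k$ has to be used on the grid. If monotonicity of $D_k$ fails somewhere, the fallback is to split $D_k$ into its two monotone pieces, $4/\sqrt{1+a}$ and the squared bracket, and bound each separately at the endpoints of every subinterval.
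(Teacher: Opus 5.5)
Your route is correct, but the verification strategy differs from the paper's.

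\textbf{The paper's route.} It rewrites $4N\ge D$ as $f_s:=r_s^2\ge g:=(1+a)^{3/2}\big(2-\sqrt{1+a}-\sqrt a\big)$, with $r_s=1-a-\frac{s}{\pi}\big(1+\frac{a}{1+a}-\sqrt{1+a}\big)$. This is the same inequality, since $4N-D=\frac{4}{(1+a)^2}(f_s-g)$. The paper then exploits second-order structure: $f_s$ is convex with concave derivative, and $g''$ is decreasing with a single zero below $0.16$. As a result, $h=f_s-g$ is concave near $0$, decreasing on $(0.08,0.14)$ for $s=2$, and convex on $[0.14,9/16]$ for $s=0.62$. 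A single tangent line at $a=0.285$ then finishes the proof. The case $s=0.62$ on $(0,0.14)$ is inherited from $s=2$, because $f_s$ decreases in $s$.

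\textbf{Your ingredients.} They all hold:
\begin{itemize}
\item Your linear bound near $0$ works: you get $D_4\le 16.83\,a$ and $D_{1.24}\le 14.88\,a$ for small $a$, so $a_0\approx 0.05$.
\item $N$ is increasing.
\item $D_k=\frac{4}{\sqrt{1+a}}-\frac{4r_s^2}{(1+a)^2}$ is increasing on $[0,9/16]$. For instance, $-r_s'\ge 1$ and $r_s\ge r_s(9/16)>0.367$ give $(1+a)^3D_k'\ge 8r_s+8r_s^2-2(1+a)^{3/2}>0$.
\end{itemize}

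\textbf{Where the difficulty actually is.} What your zeroth-order bracketing costs is the number of grid points, and you have mislocated the tight spots.
\begin{itemize}
\item Near $9/16$ the second inequality is comfortable: $4N-D_{1.24}\approx 0.28$ at $a=9/16$.
\item It is very tight around $a\approx 0.285$, where $4N-D_{1.24}\approx 5\cdot 10^{-3}$ while $D_{1.24}'\approx 4N'\approx 3.95$. This forces a mesh of order $10^{-3}$ there.
\item Likewise $4N-D_4\approx 3.5\cdot 10^{-3}$ at $a=0.14$; the first inequality actually fails near $a\approx 0.142$. Since $4N'\approx 6$ there, the last mesh step must be $\lesssim 6\cdot 10^{-4}$.
\end{itemize}
So your argument needs several dozen high-precision evaluations, whereas the paper's convexity argument needs only a handful.

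\textbf{Trade-offs.} Your approach needs only first-derivative sign checks and is conceptually closer to Lemma~\ref{lem: ratio leq 0.62}. You can also save work by noting that $D_{1.24}\le D_4$, since $r_s>0$ decreases in $s$. The second inequality on $[0,0.14]$ then follows from the first, and the second grid only has to start at $0.14$.
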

\begin{proof}
Fix $s\in\{0.62,\,2\}$ and set $\psi(a)=1+\dfrac{a}{1+a}-\sqrt{1+a}$.

\step{0}{Reduction.}
With $r_s(a)=1-a-\tfrac{s}{\pi}\psi(a)$, the squared term in the denominator equals
\[
\Big(\tfrac{2}{1+a}(1-a)-\tfrac{2s}{(1+a)\pi}\psi\Big)^2=\tfrac{4}{(1+a)^2}r_s(a)^2\,,
\]
so the denominator is
\[
D := \frac{4}{\sqrt{1+a}}-\frac{4}{(1+a)^2}r_s(a)^2=\frac{4}{(1+a)^2}\big((1+a)^{3/2}-r_s(a)^2\big)\,.
\]
Since $\psi\ge0$ on $[0,9/16]$ and $r_s>0$ (as we are going to see in Step~I), we have $r_s\le1$, hence $r_s^2\le1<(1+a)^{3/2}$ for $a>0$ and then the denominator is positive on $(0,9/16]$. 

Thus, the asserted inequality is equivalent to $4N\ge D$, where $N=\sqrt{\tfrac{a}{1+a}}+1-\tfrac{1}{\sqrt{1+a}}$ is the numerator. Since
\[
4N-D=4\sqrt{\tfrac{a}{1+a}}+4-\frac{8}{\sqrt{1+a}}+\frac{4}{(1+a)^2}r_s^2\,,
\]
$4N\ge D$ is equivalent to
\begin{equation}\label{defg}
f_s(a) := r_s^2\ \geq\ (1+a)^2\left(\frac{2}{\sqrt{1+a}}-1-\sqrt{\tfrac{a}{1+a}}\right)= (1+a)^{3/2}\big(2-\sqrt{1+a}-\sqrt a\big) =: g(a)\,.
\end{equation}
Note that $f_s(0)=g(0)=1$, and since $\sqrt{1+a}+\sqrt a$ increases from $1$ to $2$ on $[0,9/16]$, one has $g\geq 0$ with $g(9/16)=0$. We are going to prove that $f_s\geq g$ on $(0,0.14)$ for $s=2$, and on $(0,9/16)$ for $s=0.62$.

\step{I}{$f_s$ is positive, decreasing and convex, with concave derivative.}
Each derivative of $\psi$ is a difference of two powers of $1+a$, in particular
\begin{align*}
\psi'&=(1+a)^{-2}-\tfrac12(1+a)^{-1/2}\,, \\
\psi''&=-2(1+a)^{-3}+\tfrac14(1+a)^{-3/2}\,, \\
\psi'''&=6(1+a)^{-4}-\tfrac38(1+a)^{-5/2}\,.
\end{align*}
On $[0,9/16]$, we have $(1+a)^{3/2}\le(25/16)^{3/2}=125/64<2$, hence $\psi'>0$, $\psi''<0$, $\psi'''>0$.
Consequently,
\begin{align*}
r_s'=-1-\tfrac{s}{\pi}\psi'<0\,, && r_s''=-\tfrac{s}{\pi}\psi''>0\,, &&
r_s'''=-\tfrac{s}{\pi}\psi'''<0\,.
\end{align*}
In particular, $r_s$ is a decreasing function of $a\in [0,9/16]$, and then for every $s\leq 2$ we have
\[
1 = r_s(0) \geq r_s(a)\geq r_s(9/16) = \frac{7}{16}-\frac{11s}{100\pi} \geq \frac{7}{16}-\frac{11}{50\pi} >0\,.
\]
Therefore, keeping in mind that $f_s=r_s^2$,
\begin{align*}
f_s'=2r_sr_s'<0\,, && f_s''=2(r_s')^2+2r_sr_s''>0\,, && f_s'''=6r_s'r_s''+2r_sr_s'''<0\,.
\end{align*}
That is, $f_s$ is positive, decreasing and convex, with $f_s'$ concave; in particular $f_s''$ is decreasing.

\step{II}{$g'(0^+)=-\infty$, $g''$ is decreasing on $[0,9/16]$, and $g''$ vanishes at a unique $a_0\in(0,0.16)$.}
The definition~(\ref{defg}) of $g$ gives
\[\begin{split}
g'(a) &=\frac{3}{2}\sqrt{1+a}\,\big(2-\sqrt{1+a}-\sqrt a\big)
-\frac{(1+a)^{3/2}}{2}\left(\frac{1}{\sqrt{1+a}}+\frac{1}{\sqrt a}\right)\\
&= \frac{\sqrt{1+a}}{2}\left(6-4\sqrt{1+a}-4\sqrt a-\frac{1}{\sqrt a}\right)\,,
\end{split}\]
so $g'(a)\to-\infty$ as $a\to0^+$. Differentiating once more gives
\[
g''(a)=\frac{3}{2\sqrt{1+a}}-\sqrt{\frac{a}{1+a}}-\frac{1}{4\sqrt{a(1+a)}}
-\sqrt{\frac{1+a}{a}}+\frac{1}{4a}\sqrt{\frac{1+a}{a}}-2 .
\]

Now, we study the sign of $g''$ and $g'''$. Let $t := t(a) = \sqrt{\tfrac{a}{1+a}}$, which is strictly increasing and maps $[0,9/16]$ onto $[0,3/5]$. Let $\tilde g$ be defined as
\[
\tilde g(t(a)) := g''(a) = \frac{3}{2}\sqrt{1-t^2}-\frac{3}{4}t-\frac{3}{2t}+\frac{1}{4t^3}-2\,.
\]
Notice that this expression diverges to $+\infty$ for $t\to 0^+$ (so for $a\to 0^+$), thus it is positive in a right neighborhood of $0$.
By computing the first derivative in $t$ we obtain
\begin{equation}
\tilde g'(t)=-\frac{3}{2}\,\frac{t}{\sqrt{1-t^2}}-\frac{3}{4t^4}+\frac{3}{2t^2}-\frac{3}{4}\leq -\frac{3}{4t^4}+\frac{3}{2t^2}
\end{equation}
which is negative for all $t \leq 3/5$. This implies that $\tilde{g}$ is decreasing in $t$ and has at most one zero;
moreover, we have
\[
\tilde{g}(0.37)\approx -0.002<0,
\]
thus there exists a unique $t_0\in (0,0.37)$ such that $\tilde{g}(t_0)=0$. Since $t(0.16)\approx 0.371>0.37$, this implies that there exists a unique $a_0\in (0,0.16)$ such that $g''(a_0)=0$. Lastly, since $t(a)$ is strictly increasing, $g'''(a)$ has the same sign as $\tilde{g}'(t)$, thus proving that $g''(a)$ is decreasing for $a\in [0,9/16]$, as we wanted to show.

\step{III}{Conclusion.}
Keep in mind that our goal is to show that $f_s\geq g$ for $a \in (0,0.14)$ if $s=2$, and for $a\in (0,9/16)$ if $s=0.62$. We set $h=f_s-g$, so that we have to establish $h\geq 0$. Notice that $h(0)=0$ since $f_s(0)=g(0)=1$. Since $g'(a)\to-\infty$ when $a\to 0^+$, while $f_s'(0)=-2-\tfrac{s}{\pi}$, we have $h'(0^+)=+\infty$, so $h$ is increasing in a right neighbourhood of $0$, hence the desired inequality $h\geq 0$ is true at least in a right neighborhood of $0$. We are now going to consider separately the case $s=2$ and the case $s=0.62$; in both cases, we will extensively use the fact that $f_s''$ and $g''$ are decreasing, as established in Steps~I and~II.

\step{IIIa}{The case $s=2$ on $(0,0.14)$.}
On $(0,0.08)$, monotonicity of $f_2''$ and $g''$ gives
\[
h''=f_2''-g''\le f_2''(0)-g''(0.08)\le-0.4<0,
\]
so $h$ is concave on $(0,0.08)$. Since $h(0)=0$ and $h(0.08)\geq 0.045$, we deduce that $h>0$ on the whole interval $(0,0.08]$.\par

On the interval $(0.08,0.14)$, the concavity of $f_2'$ and of $g'$ give
\[
f_2'(a)\leq f_2'(0.28) + f_2''(0.28)(a-0.28) \approx 3.11a-2.34
\]
and
\[
g'(a)\geq \frac{g'(0.28)-g'(0.08)}{0.2}(a-0.08)+g'(0.08)\approx 0.17a-1.45\,.
\]
Since $3.11a-2.34<0.17a-1.48$ for all $a \in (0.08, 0.14)$, we deduce that $h$ is strictly decreasing in that interval, and so for every $a$ in the interval we have
\[
h(a)\geq h(0.14)\geq 0.001>0\,,
\]
thus concluding this case.

\step{IIIb}{The case $s=0.62$ on $(0,9/16)$.}
On $[0,9/16]$ one has $\psi\ge0$ and $r_s\in(0,1]$, so $s\mapsto f_s(a)=r_s(a)^2$ is decreasing; thus $f_{0.62}\ge f_2$, and then we already have $h>0$ on $(0,0.14)$ by Step~IIIa.\par

Let us then now consider the interval $[0.14,9/16)$; first of all, we can show that $h$ is convex there. This is clear in $[a_0,9/16)$, being $a_0$ given in Step~II; in fact, in $[a_0,9/16)$ we have that $h''=f_{0.62}''-g''>0$. Instead, in $[0.14,a_0)\subseteq [0.14,0.16)$ we can use that $f_{0.62}''$ and $g''$ are decreasing, and then
\[
h''(a)=f_{0.62}''(a)-g''(a) \geq f_{0.62}''(0.16)-g''(0.14)\ \geq\ 1.9>0\,.
\]
The convexity ensures that, in the interval $[0.14,9/16)$, $h$ lies above its tangent at any point, in particular at the point $a=0.285$. This gives us that, for every $a\in[0.14,9/16]$, one has
\[
h(a)\geq  h(0.285)+h'(0.285)(a-0.285) \geq 0.0021-0.0009|a-0.285| > 0.0021 - 0.0009 >0\,,
\]
which finally concludes the proof.
\end{proof}

\section*{Acknowledgments}
C.~G. and A.~P. are members of the Gruppo Nazionale per l'Analisi Matematica, la Probabilit\`a e le loro Applicazioni (GNAMPA), which is part of the Istituto Nazionale di Alta Matematica (INdAM).
C.~G. was supported by the INdAM - GNAMPA Project, CUP E53C25002010001.
A.~P. was supported by PRIN Project 2022E9CF89, ``Geometric Evolution Problems and Shape Optimization (GEPSO)'', PNRR Italia Domani, financed by the European Union via the Program NextGeneration EU. 
C.~T.-L. has received funding from the European Research Council (ERC) under the Grant Agreement No. 862342, from AEI project PID2024-156429NB-I00 (Spain), and from the Grant CEX2023-001347-S funded by MICIU/AEI/10.13039/50110001\\1033 (Spain).

\end{document}